\documentclass{article}
\usepackage{amsmath}
\usepackage{amssymb}
\usepackage{amsthm}
\RequirePackage{authblk}

\usepackage{mathrsfs}

\newtheorem{lemma}{Lemma}
\newtheorem{corollary}[lemma]{Corollary}
\newtheorem{proposition}[lemma]{Proposition}
\newtheorem{remark}[lemma]{Remark}

\newtheorem{theorem}[lemma]{Theorem}

\usepackage{graphicx} 
\usepackage{caption}
\usepackage{subcaption}
\usepackage{pgfplots,tikz}
\usetikzlibrary{arrows,calc,decorations.markings,math,arrows.meta,intersections,svg.path}
\newcommand{\re}{\mathbb R}
\newcommand{\e}{\varepsilon} 
\newcommand{\f}{\varphi}
\newcommand{\dd}{N}
\newcommand{\sphr}[1]{{#1}^{\#}}
\newcommand{\wphi}{w}
\newcommand{\Rbar}{R_0}

\newcommand{\msF}{\mathscr F}

\usepackage[pdftex,hyperindex]{hyperref}
\hypersetup{
  bookmarks=false,
	colorlinks=true,
	urlcolor=blue,
}
\title{
Rearrangements and the Maz'ya–Shaposhnikova limit: $\Gamma$-convergence for truncated interactions}
\author[1]{Andrea Braides} 
\author[2]{Anneliese Defranceschi}
\author[3]{Enrico Vitali} 
\affil[1]{\small Department of Mathematics, University of Rome Tor Vergata, via~della Ricerca~Scientifica 1, 00133 Rome, Italy {\tt braides@mat.uniroma2.it}}
\affil[2]{\small Department of Mathematics, University of Trento, via Sommarive 14, 38123~Povo~(Trento), Italy {\tt anneliese.defranceschi@unitn.it}}
\affil[3]{\small Department of Mathematics, University of Pavia, via A. Ferrata 5, Pavia, Italy
{\tt enrico.vitali@unipv.it}}
\date{}
\begin{document}
\maketitle
\begin{abstract}
We give a proof of the $\Gamma$-liminf inequality for the Maz'ya--Shaposhni\-kova limit, as
$s\to0^+$, of the fractional Sobolev seminorm, based on spherical rearrangements. The key
point is that the rearrangements of a weakly convergent sequence are monotone radial
profiles, which converge strongly by Helly's theorem; since the energy does not increase under
rearrangement (in the limit), this reduces the liminf inequality to a pointwise convergence
statement for radially decreasing functions.
As an application, we treat truncated energies, in which the interaction is restricted to a ball
of radius $r_s\to\infty$. Assuming $r_s^{-2s}\to\lambda\in[0,1]$, we prove that the scaled
energies $\Gamma$-converge, with respect to weak $L^2$-convergence, to
$$
(1-\lambda)N\omega_N\|u\|_{L^2}^2,
$$
where $\omega_N$ is the measure of the unit ball of $\mathbb R^N$. This identifies the
contribution of the long-range interactions and gives a continuous interpolation between the
classical Maz'ya--Shaposhnikova limit ($\lambda=0$) and the vanishing-energy regime
($\lambda=1$).
\smallskip
{\small \par\noindent
{\bf 2020 Mathematics Subject Classification}: Primary 49J45, 46E35; Secondary 26A33, 49R05.

\par\noindent
{\bf Keywords}: fractional Sobolev spaces; Maz'ya--Shaposhnikova theorem; $\Gamma$-convergence; fractional seminorms; symmetrization; spherical rearrangement.}
\end{abstract}
\bigskip

\section{Introduction}
The asymptotic behaviour of fractional Sobolev seminorms as the fractional exponent approaches the endpoints $0$ and $1$ has attracted considerable attention during the last two decades. While the limit $s\to1^{-}$ is described by the Bourgain--Brezis--Mironescu formula \cite{BBM}, the opposite regime was characterized by Maz'ya and Shaposhnikova \cite{Maz-Shap02}. In particular, for every $1< p<\infty$, the appropriately normalized $p$-th power of the fractional $p$-seminorm converges, as $s\to0^+$, to a multiple of the $L^p$-norm, provided that $u$ belongs to $W^{s,p}(\mathbb R^N)$ for some $s\in(0,1)$. In the case $p=2$ considered here, this gives
$$
\lim_{s\to0} s
\iint_{\mathbb R^N\times\mathbb R^N}
\frac{|u(x)-u(y)|^2}{|x-y|^{N+2s}}
\,dx\,dy
=
N\omega_N\|u\|_{L^2(\mathbb R^N)}^2,
$$
where $\omega_N$ denotes the measure of the $N$-dimensional unit ball.

Several extensions of the Maz'ya--Shaposhnikova formula have been obtained subsequently, including results for fractional Orlicz--Sobolev spaces \cite{AlbericoCianchiPickSlavikova20}, magnetic fractional Sobolev spaces \cite{PinamontiSquassinaVecchi17}, and more general geometric and interaction settings \cite{HanPinamonti21,HanPinamontiXuZambanini25,DavoliDiFrattaGiorgioPinamonti25}. These results show that the Maz’ya--Shaposhnikova limit is sensitive both to the geometry of the underlying space and to the behaviour of the interaction at large distances.

Besides its intrinsic interest, the Maz'ya--Shaposhnikova formula has also become a fundamental tool in the analysis of nonlocal variational problems. In particular, Crismale, De Luca, Kubin, Ninno and Ponsiglione \cite{PoDeKuNiCr21} showed that the classical limit admits a variational interpretation in terms of $\Gamma$-convergence with respect to weak $L^2(\Omega)$-convergence for functions extended to $0$ outside the bounded set $\Omega\subset\mathbb R^N$. More recently, Davoli, Fanizza and Picerni \cite{DavFanPic26} obtained a generalized Maz'ya--Shaposhnikova formula for functions that need not belong to $L^2(\mathbb R^N)$, showing that the limit is corrected by a suitable notion of mass at infinity and establishing a close connection with the asymptotic behaviour of fractional perimeters.

In this paper we propose a different route to the $\Gamma$-liminf inequality for the
Maz'ya--Shaposhnikova limit, based on spherical rearrangements. The difficulty is that the
energies are not weakly continuous in $L^2$, so some compactness mechanism is needed to pass
from a weakly convergent sequence $(u_s)$ to its limit. Here this is provided by
rearrangement: the symmetric decreasing rearrangements $u_s^\#$ are monotone radial profiles,
so by Helly's theorem they converge strongly, and the energy does not increase under
rearrangement (in the limit) by the P\'olya--Szeg\H{o} inequality. Since $u_s^\#$ and $u_s$ have
the same $L^2$ norm, the liminf inequality is thus reduced to the pointwise convergence of the
energy on radially decreasing functions, which we prove by approximation with nested step
functions. The argument is qualitative: it uses a rearrangement inequality and compactness of
monotone profiles, and no explicit integration of the kernel.

As an application we study how the Maz'ya--Shaposhnikova limit is affected when the interaction
is truncated. We consider an interaction radius $r_s\to+\infty$ and assume that
$r_s^{-2s}\to\lambda\in[0,1]$. Then the scaled energies $\Gamma$-converge, with respect to the
weak $L^2(\Omega)$-topology (for functions extended to $0$ outside a bounded set $\Omega$), to
$$
(1-\lambda)N\omega_N\int_\Omega |u(x)|^2\,dx.
$$
The parameter $\lambda$ measures the asymptotic amount of interaction that is `lost at
infinity'. When $r_s\equiv+\infty$ one has $\lambda=0$ and the classical formula is recovered.
At the opposite extreme, if $\log r_s=o(1/s)$, then $\lambda=1$ and the limiting energy
vanishes; every intermediate regime ($\log r_s\sim\frac{1}{2s}|\log \lambda|$) produces a
continuous interpolation between these two situations.
The upper bound follows from the classical Maz'ya--Shaposhnikova formula and approximation
arguments. For sequences supported in a fixed ball the lower bound can also be obtained by
integrating the kernel directly over the region where one of the two points lies outside the
support; we regard the rearrangement argument as interesting for its structure, since it
handles the lack of weak continuity by a compactness property rather than by an explicit
computation.

From a broader perspective, the present result complements \cite{DavFanPic26}. While the
latter shows how the limit changes when the admissible functions possess a nontrivial mass at
infinity, here we show how it changes when the interaction with infinity is progressively
removed. The two phenomena describe different mechanisms through which the behaviour at
infinity governs the asymptotics of fractional Sobolev energies.

We conclude with some possible variations and extensions of our asymptotic result.
The statements should carry over to the $p$-th power of the Gagliardo seminorm, $1<p<\infty$, with the weak $L^p$-topology.
In this case, the assumption on the radii becomes $r_s^{-sp}\to\lambda\in[0,1]$, and the limit energy is
$(1-\lambda)\tfrac{2}{p}N\omega_N\|u\|_{L^p}^p$, in accordance with \cite{Maz-Shap02}.
Indeed, the decomposition in Remark~\ref{rk:dec_F}, the P\'olya--Szeg\H{o} inequality, and Helly's theorem
do not rely on the Hilbert structure.
Moreover, since our argument is based on the monotonicity of the energy under spherical rearrangement,
it is natural to expect that it applies to radially symmetric decreasing kernels in the spirit of \cite{DavoliDiFrattaGiorgioPinamonti25}.
For kernels without these properties, a different approach to the $\Gamma$-liminf inequality would be needed.

An alternative to restricting interactions to $B_{r_s}\times B_{r_s}$, which cannot be written as a convolution-type kernel, is to truncate the kernel itself, replacing it with $\mathbf 1_{\{|x-y|<r_s\}}|x-y|^{-N-2s}$.
For compactly supported functions, we expect that the same value of $\lambda$ governs the limit; whether this fits the framework of \cite{HanPinamonti21} is a natural question.
It would also be interesting to study the corresponding truncated $s$-perimeters, in connection with \cite{DFPV}, and to combine the truncation with functions having nontrivial mass at infinity, as in \cite{DavFanPic26}.
In both cases, the interplay between the radius of the truncation $r_s$ and the behaviour at infinity is the main issue.

\smallskip
The paper is organized as follows. In Section \ref{sec:main} we state the main results. In Section \ref{sec:convergence} we 
establish a pointwise convergence result for spherically symmetric radially decreasing functions with a $C^1$ profile. In Section \ref{sec:rearr} we give details of the symmetrization argument to reduce the proof of the liminf inequality to the pointwise convergence proved in Section \ref{sec:convergence}, concluding the proof of Theorem \ref{thm:main}. Section \ref{sec:limsup} gives the pointwise convergence {for Lipschitz functions with compact support} and completes the proof of the recovery sequence; that is, Theorem \ref{thm:pws}.

\section{Main results}\label{sec:main}
Let $u\colon\re^\dd\to\re$ be a Lebesgue-measurable function. Let $s\in(0,1)$ and define the Gagliardo $s$-seminorm of a function $u$ on the set $A$ as
$$
|u|_{2,s}(A)=\left(\iint_{A\times A}\frac{|u(x)-u(y)|^2}{|x-y|^{\dd+2s}}\, dx\, dy\right)^{1/2}.
$$
If $A=\re^\dd$ we simply write $|u|_{2,s}$.

We fix a family of radii $r_s\to+\infty$  as $s\to 0$, with 
\begin{equation}\label{lambda}
\lim_{s\to0}r_s^{-2s}=\lambda\in[0,1].    
\end{equation}
This condition is equivalent to $\log r_s>\!>1/s$ if $\lambda=0$, $\log r_s\sim |\log\lambda|/2s$ if $\lambda\in(0,1)$, and $\log r_s<\!<1/s$ if $\lambda=1$.

Using the notation $B_{r}=B_{r}(0)$ for the ball centered at $0$ and with radius $r$,
we consider the truncated energies
\begin{equation}\label{sGs}
\msF_s(u):= s|u|^2_{2,s}(B_{r_s})=s\iint_{B_{r_s}\times B_{r_s}}\frac{|u(x)-u(y)|^2}{|x-y|^{\dd+2s}}\, dx\, dy.
\end{equation}
We allow $r_s\equiv +\infty$, in which case $\lambda=0$ and for which $B_{r_s}$ is replaced by $\re^\dd$. 
We introduce a separate notation for this case and define $\msF^0_s(u)=s|u|^2_{2,s}(\re^\dd)$.

\smallskip
\smallskip
The core of the paper is the following liminf theorem, which is proved by spherical
rearrangement in Sections~\ref{sec:convergence} and~\ref{sec:rearr}. The case $\lambda=0$ is
the $\Gamma$-liminf inequality for the classical Maz'ya--Shaposhnikova limit (for
$r_s\equiv+\infty$, and more generally for $r_s$ growing so fast that $r_s^{-2s}\to0$), so that
the theorem contains a new proof of that inequality; the truncation enters through the
parameter $\lambda$.

\begin{theorem}\label{thm:main}
Let $\msF_s$ be given by \eqref{sGs} and let $\lambda$ be defined by \eqref{lambda},
let $(u_n)$ be a sequence in $L^2(\re^\dd)$
 weakly converging to a function $u$, and let $(s_n)$ be an infinitesimal sequence in $(0,1)$.
 Assume that the functions $u_n$ vanish outside a common compact set. 
 Then
 $$
  \liminf_{n\to+\infty} \msF_{s_n}(u_n)\ge
  (1-\lambda)\dd \omega_\dd \liminf_{n\to+\infty}\|u_n\|_{L^2(\re^\dd)}^2.
 $$
\end{theorem}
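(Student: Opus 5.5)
We reduce the statement to the behaviour of radially decreasing profiles, following the strategy announced in the introduction. Fix $R_0$ with all $u_n$ vanishing outside $B_{R_0}$. Since $r_{s_n}\to+\infty$, for large $n$ we have $B_{R_0}\subset B_{r_{s_n}}$. Up to subsequences we may assume that $\msF_{s_n}(u_n)$ converges to the liminf and is finite, and that $\|u_n\|_{L^2}^2\to \ell$. Replacing $u_n$ by $|u_n|$ does not increase the energy, since $||a|-|b||\le|a-b|$, and it preserves the norm. So we may take $u_n\ge0$.

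\textbf{Step 1 (Decomposition).} Since $u_n=0$ off $B_{R_0}$, split $B_{r}\times B_{r}$ (with $r=r_{s_n}$) into $B_{R_0}\times B_{R_0}$ and the two mixed regions $B_{R_0}\times(B_r\setminus B_{R_0})$. This gives
$$
\msF_{s}(u)=s|u|^2_{2,s}(B_{R_0})+2s\int_{B_{R_0}}|u(x)|^2\int_{B_r\setminus B_{R_0}}\frac{dy}{|x-y|^{N+2s}}\,dx .
$$
The same identity holds on $\re^N$ with $B_r$ replaced by $\re^N$, which is presumably the content of the decomposition in Remark~\ref{rk:dec_F}. We then compare $\msF_s$ with the full energy $\msF^0_s$:
$$
\msF_s(u)=\msF^0_s(u)-2s\int_{B_{R_0}}|u(x)|^2\int_{\re^N\setminus B_r}\frac{dy}{|x-y|^{N+2s}}\,dx .
$$
For $|x|\le R_0$ and $|y|\ge r$ we have $|x-y|\ge |y|-R_0$, so the inner integral is at most
$$
N\omega_N\int_r^\infty \frac{\rho^{N-1}}{(\rho-R_0)^{N+2s}}\,d\rho=\frac{N\omega_N}{2s}\,r^{-2s}(1+o(1))\qquad\text{uniformly as } r\to\infty .
$$
Hence
$$
\msF_{s_n}(u_n)\ge \msF^0_{s_n}(u_n)-N\omega_N\, r_{s_n}^{-2s_n}(1+o(1))\|u_n\|^2_{L^2},
$$
and $r_{s_n}^{-2s_n}\to\lambda$. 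It therefore suffices to prove the case $\lambda=0$, namely $\liminf \msF^0_{s_n}(u_n)\ge N\omega_N\ell$.

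\textbf{Step 2 (Rearrangement).} By the Pólya–Szegő inequality for Gagliardo seminorms (Almgren–Lieb), $\msF^0_s(u_n^\#)\le \msF^0_s(u_n)$. Moreover $\|u_n^\#\|_{L^2}=\|u_n\|_{L^2}$, and $u_n^\#$ is supported in $\overline{B_{R_0}}$. Each $u_n^\#$ is radially decreasing, $u_n^\#(x)=\phi_n(|x|)$, with $\phi_n$ nonincreasing. The norms are bounded, so $\phi_n(\rho)\le C\rho^{-N/2}$; on every $[\delta,R_0]$ the profiles are therefore uniformly bounded. Helly's theorem together with a diagonal argument gives pointwise convergence $\phi_n\to\phi$ a.e., with $\phi$ nonincreasing.

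\textbf{Step 3 (Lower bound for radial profiles).} This step is the main obstacle. Strong $L^2$ convergence may fail because mass can concentrate at the origin, and even strong convergence alone is insufficient because $\msF^0_s$ is not continuous uniformly in $s$. The remedy is to use the layer-cake structure. Write $u_n^\#=\int_0^\infty \mathbf 1_{B_{\rho_n(t)}}\,dt$, a superposition of nested balls. For nested step functions $v=\sum_i a_i\mathbf 1_{B_{\rho_i}}$ with $a_i>0$, one expects
$$
\liminf_{s\to 0}\msF^0_s(v)\ge N\omega_N\|v\|^2_{L^2}
$$
uniformly in the radii. The reason is that for $x\in B_{\rho_i}$ the interactions with the far region $|y|>M\rho_{\max}$ already contribute
$$
2s\,|v(x)|^2\int_{|y|>M\rho_{\max}}|x-y|^{-N-2s}\,dy\approx N\omega_N|v(x)|^2 (M\rho_{\max})^{-2s}\to N\omega_N|v(x)|^2 .
$$
Indeed, this far-field bound holds for any $u$ supported in $B_{R_0}$ directly:
$$
\msF^0_s(u)\ge 2s\int_{B_{R_0}}|u|^2\int_{|y|>2R_0}|x-y|^{-N-2s}\,dy\,dx\ge N\omega_N(3R_0)^{-2s}\|u\|^2_{L^2}.
$$
The last factor tends to $N\omega_N\|u\|^2$ as $s\to0$, uniformly in $n$. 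This gives $\liminf \msF^0_{s_n}(u_n)\ge N\omega_N\ell$ directly and bypasses Steps 2–3. Still, in the spirit of the paper I would present the rearrangement route, with Step 3 obtained via the nested-step approximation and Helly compactness; I would keep the elementary far-field estimate as the safety net that makes the bound uniform along the sequence. Combining this with Step 1 yields the claim.
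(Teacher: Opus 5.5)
Your argument is correct, but it is not the rearrangement route that proves the theorem. The proof is your Step~1 combined with the far-field bound $\msF^0_s(u)\ge \dd\omega_\dd(3R_0)^{-2s}\|u\|_{L^2}^2$ for $u$ supported in $B_{R_0}$. This is essentially the direct kernel computation the paper records in Remark~\ref{rk:direct}, where it is done in one stroke over $2R_0<|z|<r_s-R_0$.

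Your Step~1 is sound. You only need the upper bound $\delta_s(u_n^2)\le \dd\omega_\dd\, r_s^{-2s}(1+o(1))\|u_n\|_{L^2}^2$, uniform in $n$, and the right-hand side of the theorem is expressed through $\liminf_n\|u_n\|^2$ rather than the weak limit. So the obstruction mentioned at the end of Remark~\ref{rk:dec_F}, which concerns $\|u\|^2$, does not affect your reduction.

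The paper's proof of the theorem is entirely different:
\begin{itemize}
\item P\'olya--Szeg\H{o} and Helly compactness of the rearranged profiles;
\item truncation at level $M$, with Lemma~\ref{lem} to recover the norm;
\item mollification, to replace the rearranged sequence by its strong limit uniformly in $s$;
\item the pointwise convergence of Proposition~\ref{prop:pws_radial}, whose proof needs a step-function approximation uniform in $s$.
\end{itemize}

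Your route is short and uniform over all functions supported in $B_{R_0}$, so no compactness is needed. It uses weak convergence only through boundedness. The paper's route is far heavier and is offered for its structure: it handles the lack of weak continuity by compactness of monotone profiles, which the authors expect to carry over to radially decreasing kernels.

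You should present the far-field estimate as the proof and drop Steps~2--3, rather than calling it a safety net. As written, Steps~2--3 are not a proof. The claimed bound for nested step functions \emph{uniformly in the radii} is justified only by that same far-field estimate, which never uses the step structure. You also do not address how to pass from $\sphr{u}_n$ to its limit, or how to control mass concentrating at the origin; the paper handles these by mollification, truncation and Lemma~\ref{lem}.
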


\begin{remark}\label{rk:direct}\normalfont
When the functions $u_n$ vanish outside a common ball $B_{R_0}$, the inequality in
Theorem~\ref{thm:main} can also be obtained by integrating the kernel directly. If $x\in B_{R_0}$
and $2R_0<|z|<r_s-R_0$, then $x+z\in B_{r_s}\setminus B_{R_0}$, where $u_n$ vanishes, so that
\begin{eqnarray*}
\msF_{s}(u_n)&\ge& 2s\int_{B_{R_0}}|u_n(x)|^2\int_{\{2R_0<|z|<r_s-R_0\}}\frac{dz}{|z|^{\dd+2s}}\,dx
\\
&&= \dd\omega_\dd\Big[(2R_0)^{-2s}-(r_s-R_0)^{-2s}\Big]\|u_n\|_{L^2}^2,
\end{eqnarray*}
and the bracket tends to $1-\lambda$. The argument below is different in nature: it is
qualitative, and it handles the lack of weak continuity of the energies by a compactness
property of monotone radial profiles (the rearrangements converge strongly), rather than by an
explicit computation with the kernel.
\end{remark}

This result is complemented by the next theorem, which is proved following an argument in \cite[Section 4]{Maz-Shap02}.

\begin{theorem}\label{thm:pws}
 Let 
 $u$ be a Lipschitz function on $\re^\dd$  with compact support. 
 Then
 $$
 \limsup_{s\to0} \msF_s(u)\le  (1-\lambda)\dd \omega_\dd \|u\|^2_{L^2(\re^\dd)}.
 $$
\end{theorem}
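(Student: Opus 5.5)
Fix a Lipschitz $u$ with $\mathrm{supp}\,u\subset B_{R_0}$ and Lipschitz constant $L$, and take $s$ small enough that $r_s>2R_0$. We split $B_{r_s}\times B_{r_s}$ into three regions: the pairs with both points in $B_{2R_0}$; the pairs with one point in $B_{2R_0}$ and the other in $B_{r_s}\setminus B_{2R_0}$; and the pairs with both points outside $B_{2R_0}$. On the third region the integrand vanishes.

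On $B_{2R_0}\times B_{2R_0}$ we use the standard Maz'ya--Shaposhnikova splitting at $|x-y|=1$. For $|x-y|<1$ the Lipschitz bound gives $|u(x)-u(y)|^2\le L^2|x-y|^2$, so this part is at most
$$
|B_{2R_0}|\,L^2\int_{B_1}|z|^{2-N-2s}\,dz=\frac{|B_{2R_0}|\,L^2 N\omega_N}{2-2s},
$$
which stays bounded; after multiplying by $s$ it tends to $0$. For $1\le|x-y|\le 4R_0$ the kernel is bounded by $1$, so this part is at most $4\|u\|_{L^2}^2|B_{4R_0}|$, again bounded, and multiplied by $s$ it vanishes.

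The cross term is where the limit comes from. There $u(y)=0$, so it equals
$$
2s\int_{B_{R_0}}|u(x)|^2\int_{B_{r_s}\setminus B_{2R_0}}\frac{dy}{|x-y|^{N+2s}}\,dx .
$$
For $x\in B_{R_0}$ and $y$ in this region we have $|y-x|\ge |y|-R_0$, and $B_{r_s}\setminus B_{2R_0}\subset\{z=y-x:\ R_0<|z|<r_s+R_0\}$. Hence the inner integral is at most
$$
\int_{R_0<|z|<r_s+R_0}|z|^{-N-2s}\,dz=\frac{N\omega_N}{2s}\Big[R_0^{-2s}-(r_s+R_0)^{-2s}\Big].
$$
The cross term is therefore bounded by $N\omega_N\big[R_0^{-2s}-(r_s+R_0)^{-2s}\big]\|u\|_{L^2}^2$. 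As $s\to0$ we have $R_0^{-2s}\to1$. Since $r_s\to\infty$, the ratio $(r_s+R_0)/r_s\to1$, so $(r_s+R_0)^{-2s}=r_s^{-2s}\big(1+R_0/r_s\big)^{-2s}\to\lambda$.

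Adding the three contributions gives $\limsup_{s\to0}\msF_s(u)\le(1-\lambda)N\omega_N\|u\|^2_{L^2}$. The case $r_s\equiv+\infty$ is identical with $\lambda=0$. The main point to check is the comparison $|x-y|\ge|y|-R_0$ and the resulting change of annulus. This only needs to be an upper bound, so the shift of radii by $R_0$ is harmless because $R_0^{-2s}\to1$. No other step is delicate.
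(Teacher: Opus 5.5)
Your proof is correct, and it takes a genuinely different route from the paper's.

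\textbf{The paper's route.} It first reduces to the untruncated energy through the decomposition $\msF_s(u)=\msF_s^0(u)-\delta_s(u^2)$ of Remark~\ref{rk:dec_F}, combined with Proposition~\ref{prop:corr} ($\eta_s\to\lambda N\omega_N$ uniformly on compact sets). It then bounds $\limsup_{s\to0}\msF_s^0(u)$ by the Maz'ya--Shaposhnikova splitting into $\{|y|\ge2|x|\}$ and $\{|x|<|y|<2|x|\}$. That step uses Minkowski's inequality and the behaviour of $\int|u|^2|x|^{-2s}\,dx$ as $s\to0$.

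\textbf{Your route.} You split according to the support instead. The contribution of $B_{2R_0}\times B_{2R_0}$ is $O(1)$ before multiplication by $s$. In the cross term $u(y)=0$, so it is controlled by the explicit integral over the annulus $\{R_0<|z|<r_s+R_0\}$. The truncation is then read off directly from $(r_s+R_0)^{-2s}\to\lambda$, with no need for the decomposition or for Proposition~\ref{prop:corr}.

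\textbf{Relation to Remark~\ref{rk:direct}.} Your argument is the upper-bound counterpart of that remark. Together they give, for $r_s>3R_0$,
\[
N\omega_N\bigl[(2R_0)^{-2s}-(r_s-R_0)^{-2s}\bigr]\|u\|_{L^2}^2\le\msF_s(u)\le N\omega_N\bigl[R_0^{-2s}-(r_s+R_0)^{-2s}\bigr]\|u\|_{L^2}^2+Cs,
\]
with $C$ depending only on $L$, $R_0$ and $\|u\|_{L^2}$. This yields the full limit at once, with quantitative two-sided bounds.

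\textbf{Trade-offs.} Your route is shorter and more elementary, but it is tailored to compactly supported functions and to this specific kernel. The paper's route keeps the classical case $r_s\equiv+\infty$ as a separate statement along the lines of Maz'ya--Shaposhnikova. It also handles the truncation once and for all through Proposition~\ref{prop:corr}, a tool the paper needs anyway in Sections~\ref{sec:convergence} and~\ref{sec:rearr}.
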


Note that, in particular, the two results above imply the convergence of $\msF_s(u)$ to 
$(1-\lambda)\dd\omega_\dd\|u\|^2_{L^2}$ for every 
$u\in C^1_c(\re^\dd)$.

\begin{remark}\label{rk:dec_F}\normalfont
It is useful to point out the special role of the case $r_s\equiv+\infty$. Let $u$ be a function with support in a ball $B_{R_0}$, and let $r_s>R_0$. 
The domain of integration $\re^\dd\times\re^\dd$ in the definition of $\msF_s^0$ can be split taking into account the disappearance of $u$ outside $B_{r_s}$. Thus, we can decompose $\msF_s(u)$ as follows:
\begin{equation}\label{eq:dec_F}
\begin{split}
\msF_s(u)=&\ \msF_s^0(u)-\delta_s(u^2),\\
&\text{with}\ \ 
\delta_s(f)=2s\iint_{B_{R_0}\times (\re^\dd\setminus B_{r_s})}\frac{f(x)}{|x-y|^{\dd+2s}}\, dx\, dy.
\end{split}
\end{equation}
This decomposition will be applied appropriately in the sequel to recover the general case from the asymptotics of $\msF_s^0$. For example, Proposition \ref{prop:corr} in Section~\ref{sec:convergence} implies that if $u\in C^1_c(\re^\dd)$ then $\delta_s(u^2)$ tends to $\lambda\dd\omega_\dd \|u\|^2_{L^2}$ as $s\to0$. As a consequence, it is enough to prove Theorem \ref{thm:pws} for $\msF_s^0$.
%
Note that Theorem~\ref{thm:main} cannot be derived from the case $r_s\equiv+\infty$ through this
decomposition, since the convergence of $\delta_s(u^2)$ fails if $u$ is replaced by a weakly
convergent sequence; this is one reason why a compactness mechanism, here provided by
rearrangements, is needed.
\end{remark}

 We can derive a $\Gamma$-convergence result as a corollary of the previous theorems. For
$\lambda=0$ it recovers the $\Gamma$-convergence of \cite{PoDeKuNiCr21}, and for $\lambda>0$ it
describes the effect of truncation.
Let $\Omega$ be a bounded open subset of $\re^\dd$ with Lipschitz boundary. For every $u\in L^2(\Omega)$ we denote by $\tilde u\colon\re^\dd\to\re$ the extension of $u$ to the whole $\re^\dd$ with value zero outside $\Omega$.
For every $s\in (0,1)$ we define
$$
F_s\colon L^2(\Omega)\to [0,+\infty],\qquad F_s(u)=\msF_s(\tilde u),
$$
and $F_0(u)=(1-\lambda)\dd \omega_\dd\|u\|_{L^2(\Omega)}^2$.

\begin{corollary}\label{cor:Gamma}
Let $(s_n)$ be an infinitesimal sequence in $(0,1)$. Then the functionals $F_{s_n}$ $\Gamma$-converge to $F_0$ with respect to the weak $L^2(\Omega)$-convergence  as $n\to+\infty$.
\end{corollary}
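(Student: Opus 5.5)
The proof follows the standard scheme for $\Gamma$-convergence: a liminf inequality from Theorem~\ref{thm:main} and a limsup inequality from Theorem~\ref{thm:pws} combined with a density argument. Since $\Omega$ is bounded, weak convergence in $L^2(\Omega)$ is equivalent to weak convergence in $L^2(\re^\dd)$ of the zero extensions. The space $L^2(\Omega)$ is separable, and the functionals are equi-coercive on bounded sets. It is therefore enough to work with sequences, using the sequential characterization of $\Gamma$-convergence for the weak topology.

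\emph{Liminf inequality.} Let $u_n\rightharpoonup u$ weakly in $L^2(\Omega)$. Then $\tilde u_n\rightharpoonup\tilde u$ weakly in $L^2(\re^\dd)$, and all $\tilde u_n$ vanish outside the compact set $\overline\Omega$. Theorem~\ref{thm:main} gives
$$\liminf_n F_{s_n}(u_n)\ge(1-\lambda)\dd\omega_\dd\liminf_n\|u_n\|^2_{L^2(\Omega)}.$$
The $L^2$ norm is weakly lower semicontinuous, so the right-hand side is at least $F_0(u)$. Here we use $1-\lambda\ge0$.

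\emph{Limsup inequality.} Fix $u\in L^2(\Omega)$. Because $\Omega$ is open, $C^\infty_c(\Omega)$ is dense in $L^2(\Omega)$, so we may pick $v_k\in C^\infty_c(\Omega)$ with $v_k\to u$ strongly. Each $\tilde v_k$ is Lipschitz with compact support, so Theorem~\ref{thm:pws} applies, and the remark after it (or the pair of theorems applied to the constant sequence) gives
$$\lim_n F_{s_n}(v_k)=(1-\lambda)\dd\omega_\dd\|v_k\|^2_{L^2}.$$
Hence $\Gamma\text{-}\limsup F_{s_n}(v_k)\le F_0(v_k)$. The $\Gamma$-limsup is lower semicontinuous with respect to the weak topology, and $F_0$ is continuous along the strongly convergent sequence $v_k$. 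Therefore $\Gamma\text{-}\limsup F_{s_n}(u)\le\liminf_k F_0(v_k)=F_0(u)$.

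If one wants an explicit recovery sequence instead, choose $k(n)\to\infty$ slowly enough that
$$F_{s_n}(v_{k(n)})\le F_0(v_{k(n)})+1/k(n).$$
This is possible by a diagonal argument, and $v_{k(n)}\to u$ strongly, hence weakly.

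\emph{Main obstacle.} The only delicate point is justifying the lower semicontinuity of the $\Gamma$-limsup in the weak topology, which is not metrizable on all of $L^2(\Omega)$. The explicit diagonal recovery sequence avoids this issue, so I would present it that way. The Lipschitz regularity of $\partial\Omega$ is not needed here, since $C^\infty_c(\Omega)$ is dense in $L^2(\Omega)$ for any open set.
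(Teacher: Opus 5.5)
Your proposal is correct and is essentially the paper's proof: the liminf inequality comes from Theorem~\ref{thm:main} applied to the zero extensions, which vanish outside $\overline\Omega$, together with weak lower semicontinuity of the norm. The limsup comes from Theorem~\ref{thm:pws} on $C^\infty_c(\Omega)$ followed by density and a diagonal argument, which you spell out more explicitly than the paper does, and your preliminary remark on equi-coercivity is unnecessary because, as in the paper, $\Gamma$-convergence with respect to weak convergence is understood sequentially (as phrased, that remark holds trivially anyway, since bounded sets are weakly relatively compact in $L^2$).
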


\proof ($\Gamma$-$\liminf$) Let $(u_n)$ be a sequence in $L^2(\Omega)$
 weakly converging to a function $u$. Let $\tilde u_n$ and $\tilde u$ denote the extension of $u_n$ and $u$, respectively, to the whole $\re^\dd$ with zero value outside $\Omega$.
Since $\Omega$ is bounded, and $(\tilde u_n)$ weakly converges to $\tilde u$ in $L^2(\re^\dd)$, we can apply Theorem \ref{thm:main}, and deduce that
\begin{align*}
\liminf_{n\to+\infty} F_{s_n}(u_n)&=\liminf_{n\to+\infty} \msF_{s_n}(\tilde u_n)\\
    &\ge (1-\lambda)\dd\omega_\dd\|\tilde u\|_{L^2(\re^\dd)}^2
    =(1-\lambda)\dd\omega_\dd\|u\|_{L^2(\Omega)}^2\,.
\end{align*}
\par\noindent
($\Gamma$-$\limsup$) Theorem \ref{thm:pws} guarantees the $\Gamma$-$\limsup$ inequality for $C^\infty_c(\Omega)$  functions. The general case follows by a standard approximation and diagonalization argument.
 \qed
\medskip

In the next two sections we prove Theorem~\ref{thm:main} by rearrangements: Section~\ref{sec:convergence}
establishes the pointwise convergence on radially decreasing functions, and Section~\ref{sec:rearr}
reduces the general case to it. Theorem~\ref{thm:pws} is proved in Section~\ref{sec:limsup}.

\par
In order to simplify the notation, the sequences $(u_n)_n$ in the previous statements will be denoted by $(u_s)_s$ (instead of $(u_{s_n})_n$), where $s$ is intended to vary
along a fixed infinitesimal positive sequence. 

\section{Convergence on radially decreasing functions}\label{sec:convergence}

We first prove a pointwise convergence result for {spherically symmetric radially decreasing functions with a $C^1$ profile and compact support.}
In the next section we reduce to this case by a rearrangement argument.

\begin{proposition}\label{prop:pws_radial}
 Let $\phi\colon[0,+\infty)\to[0,M]$ be a $C^1$ decreasing function, with $\phi(r)=0$ if $r\ge R_0$ for some $R_0>0$. 
 Let $\wphi(x)=\phi(|x|)$. Then
 \begin{equation}\label{eq:ptw_conv}
   \lim_{s\to 0} \msF_s(w)=(1-\lambda)\dd\omega_\dd\|\wphi\|_{L^2(\re^\dd)}^2.
 \end{equation}
\end{proposition}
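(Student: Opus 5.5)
The plan is to split the energy with the decomposition \eqref{eq:dec_F} of Remark~\ref{rk:dec_F}, namely $\msF_s(\wphi)=\msF^0_s(\wphi)-\delta_s(\wphi^2)$. This reduces \eqref{eq:ptw_conv} to two limits:
$$\msF^0_s(\wphi)\to \dd\omega_\dd\|\wphi\|^2_{L^2}\qquad\text{and}\qquad \delta_s(\wphi^2)\to\lambda\dd\omega_\dd\|\wphi\|^2_{L^2}.$$
This is legitimate as soon as $r_s>R_0$, which holds for small $s$.

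For the correction term, I fix $x\in B_{R_0}$ and evaluate $2s\int_{|y|>r_s}|x-y|^{-\dd-2s}\,dy$. Since $|x-y|$ lies between $|y|-R_0$ and $|y|+R_0$, the inner integral is bounded above and below by
$$2s\,\dd\omega_\dd\int_{r_s}^\infty \rho^{\dd-1}(\rho\pm R_0)^{-\dd-2s}\,d\rho .$$
Because $R_0/r_s\to0$, this equals $\dd\omega_\dd r_s^{-2s}(1+o(1))$ uniformly in $x$. By \eqref{lambda}, integrating against $\wphi^2$ gives $\delta_s(\wphi^2)\to\lambda\dd\omega_\dd\|\wphi\|^2_{L^2}$. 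The $C^1$ regularity is not needed here; boundedness and compact support suffice. This is presumably Proposition~\ref{prop:corr}.

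For the full-space term I split $\re^\dd\times\re^\dd$ into the region $|x-y|<1$ and its complement. On $\{|x-y|<1\}$ I use the Lipschitz bound $|\wphi(x)-\wphi(y)|\le \|\phi'\|_\infty|x-y|$. Together with the support condition (at least one of the points lies in $B_{R_0}$), this bounds the contribution by
$$C\|\phi'\|_\infty^2 R_0^\dd\int_{|z|<1}|z|^{2-\dd-2s}\,dz,$$
which is bounded uniformly in $s$; multiplied by $s$ it vanishes. On $\{|x-y|\ge1\}$ I expand
$$|\wphi(x)-\wphi(y)|^2=\wphi(x)^2+\wphi(y)^2-2\wphi(x)\wphi(y).$$

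The square terms give $2s\int\wphi(x)^2\int_{|z|\ge1}|z|^{-\dd-2s}\,dz\,dx=\dd\omega_\dd\|\wphi\|^2_{L^2}$ exactly, since $2s\int_{|z|\ge1}|z|^{-\dd-2s}dz=\dd\omega_\dd$.

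The cross term is bounded by
$$2s\iint_{|x-y|\ge1}\frac{\wphi(x)\wphi(y)}{|x-y|^{\dd+2s}}\,dx\,dy\le 2s\,\|\wphi\|_{L^1}^2\sup_{|z|\ge1}|z|^{-\dd-2s}\le 2s\,\|\wphi\|_{L^1}^2\to0,$$
which is finite by compact support. Collecting the three pieces gives $\msF^0_s(\wphi)\to\dd\omega_\dd\|\wphi\|^2_{L^2}$, and subtracting the correction limit yields \eqref{eq:ptw_conv}.

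The only delicate step is the uniform asymptotics of the tail integral in $\delta_s$. There one must check that the ratio $(\rho\pm R_0)/\rho$, raised to the power $-\dd-2s$ on $[r_s,\infty)$, tends to $1$ uniformly. This holds because $R_0/r_s\to0$ and the exponent stays bounded. The relation with $r_s^{-2s}$ then follows from $\int_{r_s}^\infty\rho^{-1-2s}d\rho=r_s^{-2s}/(2s)$. Monotonicity of $\phi$ plays no role in this argument.
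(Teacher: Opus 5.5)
Your proof is correct, and for the full-space term it takes a genuinely different route from the paper. Both arguments start the same way: reduce to $\msF^0_s$ via \eqref{eq:dec_F} and handle $\delta_s(w^2)$ by the uniform convergence of the kernel. Your two-sided bound with $|y|\pm R_0$ is just a variant of the proof of Proposition~\ref{prop:corr}. The paper then approximates $w$ by the nested step functions $w^h=\sum_i h1_{B_i}$. It computes the limit explicitly for characteristic functions of balls and for sums of nested ones (Steps 1--3). It then proves $\msF^0_s(w^h-w)\le Ch^{2-2s}$ uniformly for small $s$ (Step 4). That route relies on radial monotonicity, so that superlevel sets are concentric balls. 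It also uses the $C^1$ regularity through the comparison \eqref{eq:est_dh} between the level spacing $h$ and the radial gaps $\delta$, and the boundedness of $\msF^0_s(w)$ taken from Theorem~\ref{thm:pws}. Your splitting at $|x-y|=1$ avoids all of this. The near-diagonal part is $O(s)$ by the Lipschitz bound. On $\{|x-y|\ge 1\}$ the diagonal terms of the expanded square give exactly $N\omega_N\|w\|_{L^2}^2$, and the cross term is at most $2s\|w\|_{L^1}^2$. The result is the quantitative statement $\msF^0_s(w)=N\omega_N\|w\|_{L^2}^2+O(s)$ for every compactly supported Lipschitz $w$, with no use of radial symmetry or monotonicity. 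In particular your argument also proves Theorem~\ref{thm:pws}, and even gives a full limit there. So the appeal to Section~\ref{sec:limsup} inside Step 4 becomes unnecessary. The paper's layer-cake construction fits its rearrangement narrative. For this statement, however, your argument is shorter, more general, and self-contained.
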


We preliminarily prove a result about the kernel in the definition of $\delta_s$ (see~\eqref{eq:dec_F}) needed in the proof of Proposition \ref{prop:pws_radial}.

\begin{proposition}\label{prop:corr}
For any $x\in \re^\dd$, define
$$
\eta_s(x)=2s\int_{\re^\dd\setminus B_{r_s}}\frac1{|x-y|^{\dd+2s}}\,dy.
$$
Then $\eta_s\to \lambda\dd\omega_\dd$ uniformly on compact sets.
\end{proposition}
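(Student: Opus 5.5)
We will prove Proposition~\ref{prop:corr} by comparing $\eta_s(x)$ with its value at $x=0$, which can be computed exactly, and then controlling the error uniformly for $x$ in a fixed ball $B_R$.

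\emph{Step 1: the value at the origin.} In polar coordinates,
$$
\eta_s(0)=2s\,\dd\omega_\dd\int_{r_s}^{+\infty}\rho^{-1-2s}\,d\rho=\dd\omega_\dd\, r_s^{-2s},
$$
which tends to $\lambda\dd\omega_\dd$ by \eqref{lambda}. (If $r_s\equiv+\infty$ then $\eta_s\equiv0$ and there is nothing to prove.)

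\emph{Step 2: uniform comparison.} Fix $R>0$ and let $|x|\le R$. Take $s$ small enough that $r_s>2R$. For $|y|\ge r_s$ we have $|y|-R\le|x-y|\le|y|+R$, and hence
$$
\Big(1+\tfrac{R}{r_s}\Big)^{-1}|y|\le |x-y|\le \Big(1-\tfrac{R}{r_s}\Big)^{-1}|y|.
$$
This gives the two-sided bound
$$
\Big(1-\tfrac{R}{r_s}\Big)^{\dd+2s}\eta_s(0)\le\eta_s(x)\le\Big(1+\tfrac{R}{r_s}\Big)^{\dd+2s}\eta_s(0).
$$
Since $r_s\to+\infty$ and $\dd+2s\le\dd+2$, both prefactors tend to $1$, uniformly in $x\in B_R$.

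\emph{Step 3: conclusion.} Combining the two steps,
$$
\sup_{|x|\le R}|\eta_s(x)-\lambda\dd\omega_\dd|\le \dd\omega_\dd\,|r_s^{-2s}-\lambda|+\Big[\Big(1+\tfrac{R}{r_s}\Big)^{\dd+2}-1\Big]\dd\omega_\dd\, r_s^{-2s},
$$
where $r_s^{-2s}\le1$ is bounded. The right-hand side tends to $0$, which is uniform convergence on $B_R$, and hence on every compact set.

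There is no real obstacle. The only point needing care is that the comparison constants in Step 2 must be uniform in $s$, and this holds because the exponent $\dd+2s$ stays bounded.
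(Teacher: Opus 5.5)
Your route is the paper's: compare $\eta_s(x)$ with $\eta_s(0)=\dd\omega_\dd r_s^{-2s}$ (the paper's $I_s$), using that $|x-y|/|y|$ is uniformly close to $1$ for $|x|\le R$, $|y|\ge r_s$, and that the exponent $\dd+2s$ stays bounded. The paper phrases this through a Lipschitz bound for $t\mapsto t^{\dd+2s}$ near $t=1$ together with $\bigl||y|/|x-y|-1\bigr|\le R/(r_s-R)$; you phrase it as a two-sided multiplicative bound. That difference is cosmetic.

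There is, however, a slip in Step 2. The lower bound $(1+R/r_s)^{-1}|y|\le|x-y|$ is false. Take $|x|=R$ and $y=r_s x/|x|$: then $|x-y|=r_s-R$, which is smaller than $r_s^2/(r_s+R)$. As a result, your upper estimate $\eta_s(x)\le(1+R/r_s)^{\dd+2s}\eta_s(0)$ is left unjustified.

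What $|y|-R\le|x-y|\le|y|+R$ and $R\le R|y|/r_s$ actually give is $(1-R/r_s)|y|\le|x-y|\le(1+R/r_s)|y|$. Hence
$$
\Big(1+\tfrac{R}{r_s}\Big)^{-(\dd+2s)}\eta_s(0)\le\eta_s(x)\le\Big(1-\tfrac{R}{r_s}\Big)^{-(\dd+2s)}\eta_s(0).
$$
In Step 3 the bracket then becomes $\bigl[(1-R/r_s)^{-(\dd+2)}-1\bigr]$; by Bernoulli's inequality this dominates both one-sided deviations. Since $r_s>2R$ and $r_s\to+\infty$, the bracket still tends to $0$ uniformly on $B_R$. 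So your conclusion stands once the inequalities are corrected.
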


\proof  
We set 
$$
I_s=2s\int_{\re^\dd\setminus B_{r_s}}\frac1{|y|^{\dd+2s}}\,dy\,.
$$
If $|x|\le R<r_s$ then we have
$$
|\eta_s(x)- I_s|\le
    2s\int_{\re^\dd\setminus B_{r_s}}\frac1{|y|^{\dd+2s}}\left|\left(\frac{|y|}{|x-y|}\right)^{\dd+2s}-1\right| dy\,.
$$
For every $y\in\re^\dd\setminus B_{r_s}$, from the inequality $\bigl||x-y|-|y|\bigr|\le |x|\le R$ we deduce that
$|x-y|\ge r_s-R$, and
$$
\left|\frac{|y|}{|x-y|}-1\right|\le\frac{|x|}{|x-y|}\le\frac{R}{r_s-R}.
$$
Therefore, to estimate the integral above, for $r_s$ sufficiently large we consider the function 
$t\mapsto t^{\dd+2s}$ in a neighbourhood of $t=1$, where the function is Lipschitz.
It follows that there exists a constant $L>0$ such that for every $s$
$$
\bigg|\Big(\frac{|y|}{|x-y|}\Big)^{\dd+2s}-1\bigg|\le L \left|\frac{|y|}{|x-y|}-1\right|
    \le L \frac{R}{r_s-R}.
$$
Hence,
$$
|\eta_s(x)- I_s|\le L \frac{R}{r_s-R} I_s\,.
$$
A direct computation shows that $I_s$ tends to $\lambda \dd\omega_\dd$ as $s\to 0$, thus concluding the proof. \qed

\proof[Proof of Proposition {\rm\ref{prop:pws_radial}}.]
It is enough to consider the case $r_s\equiv +\infty$; i.e., $\msF_s^0$. Indeed, by the decomposition in \eqref{eq:dec_F}, we have $\msF_s(w)=\msF_s^0(w)-\delta_s(w^2)$, and $\delta_s(w^2)\to\lambda\dd\omega_\dd\|\wphi\|_{L^2}^2$ as $s\to0$
by Proposition \ref{prop:corr}.

The proof will proceed by approximation of $\wphi$ by radially symmetric step functions.
\par
Fix $h>0$ and let $n=\lfloor M/h \rfloor$. For every $i=0,\ldots,n$, let 
$$
B_i=\{x\in\re^\dd:\ \wphi(x)>i h\}=\{x\in \re^\dd:\ \phi(|x|)>ih\}.
$$
Note that the radial symmetry of $\wphi$ implies that $B_i$ is a ball centered at the origin. We denote by $r_i$ the radius. We can assume that $r_0=R_0$ and $B_n\neq\emptyset$.
Thus, since $\phi(r_i)=ih$ and $\phi$ is a $C^1$-function, we have
$0<r_{i+1}< r_i\le R_0$. 
\par
Define
\begin{equation}\label{eq:wh}
\wphi^h=\sum_{i=0}^n h 1_{B_i}.
\end{equation}

In Step 1 we prove equality \eqref{eq:ptw_conv} for the characteristic function of a ball; then (in Step 2 and Step 3)
we consider the general case of simple functions. In Step~4 we show that $\msF_s^0(\wphi)$ can be approximated
by $\msF_s^0(\wphi^h)$ as $h\to0$, \emph{uniformly with respect to $s$}. This allows us to conclude. Indeed, 
from Step 3 and 4 we deduce that
\begin{align*}
\lim_{s\to0} \msF_s^0(w)&=\lim_{s\to0}\lim_{h\to0}\msF_s^0(w^h)=\lim_{h\to0}\lim_{s\to0} \msF_s^0(w^h)\\
    &=\dd\omega_\dd \lim_{h\to0} \|w^h\|_{L^2(\re^\dd)}^2
    =\dd\omega_\dd\|w\|_{L^2(\re^\dd)}^2.
\end{align*}

\subparagraph{Step 1} We first prove \eqref{eq:ptw_conv} in the case where $\wphi=1_{B_r(0)}$, in which
\begin{equation}\label{st-1}
\lim_{s\to0} \msF_s^0(\wphi)=
\dd\omega_\dd\|\wphi\|_{L^2(\re^\dd)}^2.
\end{equation}

To show this, we note that, setting 
$$
\sigma(x)=\int_{B_r(0)\setminus B_{r-|x|}(x)}\frac1{|x-y|^{\dd+2s}}\, dy,
$$
we have
\begin{align*}
\msF_s^0(\wphi)&=s\int\int_{\re^\dd\times\re^\dd}\frac{|\wphi(x)-\wphi(y)|^2}{|x-y|^{\dd+2s}}\, dx dy\cr
  &=2s\int_{B_r(0)}\, dx \int_{\re^\dd\setminus B_r(0)}\frac1{|x-y|^{\dd+2s}}\,dy\cr
  &=2s\int_{B_r(0)}\left(\int_{\re^\dd\setminus B_{r-|x|}(x)}\frac1{|x-y|^{\dd+2s}}\, dy -\sigma(x)\right)\,dx.
\end{align*}

\begin{figure}[h!]
\begin{center}
\begin{tikzpicture}[scale=1.2]
 \draw [fill=gray!20] (0,0) circle [radius=1];
 \draw [fill=white] (5/12,5/12) circle [radius=1-0.5892];
 \draw [densely dashed] (5/12,5/12) circle [radius=1+0.5892];
 \draw [fill] (5/12,5/12) circle [radius=0.025];
 \node [right] at (5/12,5/12) {$x$};
 \draw [fill] (0,0) circle [radius=0.025];
 \node [left] at (0,0) {$O$};
 \node at (1.9,1.9) {$B_{r+|x|}(x)$};
  \node at (1.4,-.3) {$B_r(0)$};
\draw[{Stealth[scale length=1]}-] (0.1,.8) .. controls +(120:0.4cm) and +(-150:0.3cm) .. (0.3,1.4);
\node[right] at (0.22,1.4) {$B_{r-|x|}(x)$};
\end{tikzpicture}
\caption{in the estimate of $\sigma(x)$, the domain of integration (shaded part) is controlled by $B_{r+|x|}(x)\setminus B_{r-|x|}(x)$.}
\label{fig:A}
\end{center}
\end{figure}
Since $B_r(0)\subseteq B_{r+|x|}(x)$ (see Figure \ref{fig:A}), we get
\begin{align*}
\sigma(x)&\le \int_{B_{r+|x|}(x)\setminus B_{r-|x|}(x)}\frac1{|x-y|^{\dd+2s}}\, dy
 =\int_{B_{r+|x|}(0)\setminus B_{r-|x|}(0)}\frac1{|\xi|^{\dd+2s}}\, d\xi\cr
 &=\dd\omega_\dd\int_{r-|x|}^{r+|x|}\frac1{\varrho^{\dd+2s}}\varrho^{\dd-1}\,d\varrho
 =\frac{\dd\omega_\dd}{2s}\left[\frac1{(r-|x|)^{2s}}-\frac1{(r+|x|)^{2s}}\right]\,.
\end{align*}
Hence,
$$
2s\int_{B_r(0)} \sigma(x)\,dx\le (\dd\omega_\dd)^2
    \int_0^r\left[\frac1{(r-\varrho)^{2s}}-\frac1{(r+\varrho)^{2s}}\right]\varrho^{\dd-1}\,d\varrho\,.
$$
The integral on the right-hand side is majorized by
$$
r^{\dd-1}\int_0^r\left[\frac1{(r-\varrho)^{2s}}-\frac1{(r+\varrho)^{2s}}\right]\,d\varrho
    =\frac{r^{\dd-1}}{1-2s}[2r^{1-2s}-(2r)^{1-2s}]\underset{s\to0}\to0\,.
$$
Therefore,
$$
\msF_s^0(\wphi)=2s\int_{B_r(0)}dx\int_{\re^\dd\setminus B_{r-|x|}(x)}\frac1{|x-y|^{\dd+2s}}\, dy + o_s(1).
$$
A straightforward computation of the inner integral yields $\frac{\dd\omega_\dd}{2s}(r-|x|)^{-2s}$.
Therefore,
\begin{align*}
\msF_s^0(\wphi)=\dd\omega_\dd\int_{B_r(0)}(r-|x|)^{-2s}\, dx + o_s(1)=(\dd\omega_\dd)^2\int_0^r\frac{\varrho^{\dd-1}}{(r-\varrho)^{2s}}\,d\varrho+o_s(1).
\end{align*}
By applying Lebesgue's Dominated Convergence Theorem, the limit of the right-hand side as $s\to0$ gives
$$
(\dd\omega_\dd)^2\int_0^r \varrho^{\dd-1}\,d\varrho
    =\dd(\omega_\dd)^2 r^\dd=\dd\omega_\dd\|\wphi\|_{L^2}^2\,,
$$
which proves \eqref{st-1}.

\subparagraph{Step 2} Let $0<r<R$ and $h,H>0$. Then $\wphi=H 1_{{B_R(0)}}+h1_{{B_r(0)}}$
satisfies~\eqref{eq:ptw_conv}.

We first note that we have
\begin{align*}
|\wphi(x)-\wphi(y)|^2=H^2|&1_{B_R(0)}(x)-1_{B_R(0)}(y)|^2 + h^2|1_{B_r(0)}(x)-1_{B_r(0)}(y)|^2\cr
        &+ 2Hh\bigl(1_{B_R(0)}(x)-1_{B_R(0)}(y)\bigr)\bigl(1_{B_r(0)}(x)-1_{B_r(0)}(y)\bigr).
\end{align*}
If the last term does not vanish, then one of $x$ and $y$ must lie outside $B_R(0)$.
The other must belong to $B_r(0)$, since otherwise $1_{B_r(0)}(x)-1_{B_r(0)}(y)=0$ (recall that $B_r(0)\subseteq B_R(0)$). 
In both cases, the last term takes the value $2Hh$.
Therefore,
\begin{equation}\label{eq:two}
\msF_s^0(\wphi)=H^2 \msF_s^0(1_{B_R(0)})+h^2 \msF_s^0(1_{B_r(0)})+{\mathscr R},
\end{equation}
where
$\displaystyle
{\mathscr R}=4Hhs\int_{B_r(0)}\,dx\int_{\re^\dd\setminus B_R(0)}\frac1{|x-y|^{\dd+2s}}\, dy$. 

We now argue as in the previous step:
$$
{\mathscr R}=4Hhs\int_{B_r(0)}\left(\int_{\re^\dd\setminus B_{R-|x|}(x)}\frac1{|x-y|^{\dd+2s}}\, dy 
        -\tilde\sigma(x)\right)\,dx,
$$
where
$\displaystyle
\tilde\sigma(x)=\int_{B_R(0)\setminus B_{R-|x|}(x)}\frac1{|x-y|^{\dd+2s}}\, dy$.
Since $B_R(0)\subseteq B_{R+|x|}(x)$, as in Step 1 we have
$$
\tilde\sigma(x)\le \int_{B_{R+|x|}(x)\setminus B_{R-|x|}(x)}\frac{dy}{|x-y|^{\dd+2s}}
    =\frac{\dd\omega_\dd}{2s}\Big(\frac1{(R-|x|)^{2s}}-\frac1{(R+|x|)^{2s}}\Big)\,,
$$
and 
$ \displaystyle
s\int_{B_r(0)}\tilde\sigma(x)\,dx$ is infinitesimal as $s\to0$.

Thus,
$$
{\mathscr R}=4Hhs\int_{B_r(0)}\,dx\int_{\re^\dd\setminus B_{R-|x|}(x)}\frac1{|x-y|^{\dd+2s}}\, dy + o_s(1).
$$
As in the previous step, the inner integral is equal to $\frac{\dd\omega_\dd}{2s}(R-|x|)^{-2s}$. Hence,
\begin{align*}
{\mathscr R}&=2Hh \dd\omega_\dd \int_{B_r(0)}\frac1{(R-|x|)^{2s}}\,dx\cr
    &=2Hh (\dd\omega_\dd)^2 \int_0^r\frac1{(R-\varrho)^{2s}}\varrho^{\dd-1}\,d\varrho
            \underset{s\to0}\to 
            2Hh \dd\omega_\dd|B_r(0)|.
\end{align*}
\par
Recall now \eqref{eq:two} and apply the result of Step 1; then
\begin{align*}
\lim_{s\to0}\msF_s^0(\wphi)=\dd\omega_\dd (H^2 |B_R(0)| + h^2 |B_r(0)| + 2Hh |B_r(0)|)=\dd\omega_\dd\|\wphi\|^2_{L^2(\re^\dd)}\,. \end{align*}
This concludes the proof of Step 2.

\subparagraph{Step 3} 
Let $(r_i)_{i=0,\ldots,n}$ and $(h_i)_{i=0,\ldots,n}$ be strictly positive values with
$R_0 = r_0 > r_1 > \ldots > r_n > 0$.
Let $\wphi=\sum_{i=0}^n h_i 1_{B_{r_i}(0)}$. Then $\wphi$ satisfies \eqref{eq:ptw_conv}.

We first note that we have
\begin{align*}
|\wphi(x)-\wphi(y)|^2&=\Bigl|\sum_i h_i\bigl(1_{B_{r_i}(0)}(x)-1_{B_{r_i}(0)}(y)\bigr)\Bigr|^2\cr
    &=\sum_i h_i^2|1_{B_{r_i}(0)}(x)-1_{B_{r_i}(0)}(y)|^2 \cr
            &\quad+2\sum_{i<j}h_i h_j\bigl(1_{B_{r_i}(0)}(x)-1_{B_{r_i}(0)}(y)\bigr)\bigl(1_{B_{r_j}(0)}(x)-1_{B_{r_j}(0)}(y)\bigr).
\end{align*}
If we use this equality in the expression of $\msF_s^0(w)$,
the first sum on the right-hand side equals $\sum_i h_i^2 \msF_s^0(1_{B_{r_i}(0)})$, which, by Step 1, tends to 
$\dd\omega_\dd\sum_i h_i^2|B_{r_i}(0)|$ as $s\to0$.
For the sum $\sum_{i<j}$, we can argue as in the previous step. Finally, we conclude that
$$
\lim_{s\to0}\msF_s^0(\wphi)=\dd\omega_\dd \Bigl(\sum_i h_i^2|B_{r_i}(0)| + 2\sum_{i<j}h_i h_j|B_{r_j}(0)|\Bigr)\,.
$$
We show that this limit equals the square of the $L^2$ norm of $\wphi$. 
Indeed, since $B_{r_j}(0)\subseteq B_{r_i}(0)$ if $i<j$, we have
\begin{align*}
\int_{\re^\dd}\wphi^2(x)\,dx&=\sum_{i,j}h_i h_j \int_{\re^\dd}1_{B_{r_i}(0)}(x) 1_{B_{r_j}(0)}(x)\, dx\cr
    &=\sum_i h_i^2|B_{r_i}(0)| + 2\sum_{i<j}h_i h_j |B_{r_j}(0)|. 
\end{align*}
This proves Step 3. 
\medskip

We next derive an estimate, which will be needed in the next step, for 
$$
\delta:=\frac12\min_{0\le i\le n-1}(r_i-r_{i+1})
$$ 
in terms of $h$.
Fix a closed subinterval $J$ of $[0,R_0]$ where $\phi'$ does not vanish: let $|\phi'|\ge a>0$
in $J$, for some $a$. Let $h$ be less than half the width of the interval $\phi(J)$; thus, there exists $j$ such that
$jh, (j+1)h\in \phi(J)$. We deduce that
$$
h=\phi(r_{j+1})-\phi (r_j)\ge a (r_j-r_{j+1})\ge 2 a\delta.
$$
On the other hand, if $L$ is a Lipschitz constant of $\phi$, for every $i$ we have
$$
h=\phi(r_{i+1})-\phi(r_i)\le L (r_i-r_{i+1}),
$$
hence $h\le 2L\delta$. We conclude that there exist constants $L,a>0$, depending only on $w$, such that
\begin{equation}\label{eq:est_dh}
 \frac{h}{2L}\le \delta\le \frac{h}{2 a}
\end{equation}
 for $h$ sufficiently small.

\subparagraph{Step 4} {We now fix $w$ as in the statement of the proposition; let $w^h$ be defined as in \eqref{eq:wh}.} We show that 
\begin{equation}\label{eq:conv_unif}
 \lim_{h\to0}\msF_s^0(w^h)=\msF_s^0(w) \qquad\text{uniformly for $s$ sufficiently close to $0$.}
\end{equation}

We first prove an estimate for $|w^h-w|_{2,s}^2$.
Note that the function $\nu_h:=w^h-w$ vanishes outside $B_{R_0}(0)$; then, in the integral that defines $|\nu_h|_{2,s}$,
we require that at least one of the points $x$ and $y$ lies
in $B_{R_0}(0)$:
$$
|\nu_h|_{2,s}^2\le 2\int_{B_{R_0}(0)} \int_{\re^\dd}\frac{|\nu_h(x)-\nu_h(y)|^2}{|x-y|^{\dd+2s}}\,dy\, dx.  
$$
\par
Let now $\delta=\frac12\min_i(r_i-r_{i+1})$, 
and divide the inner $y$-integral into two sets,
depending on $x\in B_{R_0(0)}$: 
\begin{align*}
 |\nu_h|_{2,s}^2=2\int_{B_{R_0}(0)} dx&\int_{\re^\dd\setminus B_\delta(x)}\frac{|\nu_h(x)-\nu_h(y)|^2}{|x-y|^{\dd+2s}}\,dy\\
    &+ 2\int_{B_{R_0}(0)} dx\int_{B_\delta(x)}\frac{|\nu_h(x)-\nu_h(y)|^2}{|x-y|^{\dd+2s}}\,dy
    =:I_1+I_2.
\end{align*}
\par
To estimate $I_1$ we note that $|w^h-w|\le h$, hence $|\nu_h(x)-\nu_h(y)|\le 2h$. Then
\begin{equation*}
I_1\le 2(2h)^2\int_{B_{R_0}(0)} dx\int_{\re^\dd\setminus B_\delta(x)}\frac1{|x-y|^{\dd+2s}}\,dy
    =8\dd\omega_\dd h^2 \delta^{-2s}\frac{|B_{R_0}(0)|}{2s}\,.
\end{equation*}
\par
We now address $I_2$, for which we use the following decomposition:
$$
I_2=2\sum_{i=0}^n\int_{S_i} dx \int_{B_\delta(x)}\frac{|\nu_h(x)-\nu_h(y)|^2}{|x-y|^{\dd+2s}}\,dy, 
$$
where $S_i= B_i\setminus \overline B_{i+1}$ if $i<n$, and $S_n=B_n$.
\par
Fix $i<n$. For every $x\in S_i$ and $y\in B_\delta(x)$ we have:
\begin{equation}\label{eq:w_split}
|\nu_h(x)-\nu_h(y)|^2\le 2\bigl(|w^h(x)-w^h(y)\bigr|^2+|w(x)-w(y)|^2\bigr).
\end{equation}
We have $w^h(x)-w^h(y)=0$ unless
$d(x,\partial S_i)<\delta$, and $y\in B_\delta(x)\setminus S_i$.
In this case, since $\delta$ is at most half the width of every $S_j$,
we have $|w^h(x)-w^h(y)|=h$. Then
\begin{equation}\label{eq:bdry_Si}
\begin{split}
\int_{S_i} dx\int_{B_\delta(x)}&\frac{|w^h(x)-w^h(y)|^2}{|x-y|^{\dd+2s}}\,dy\\
    &=\int_{\{x\in S_i:\ d(x,\partial S_i)<\delta\}} dx\int_{B_\delta(x)\setminus S_i}\frac{h^2}{|x-y|^{\dd+2s}}\,dy\,.
\end{split}
\end{equation}

\begin{figure}[h!]
\begin{center}
\begin{tikzpicture}
 \tikzmath{\c0=5; \c1=0.88;}
 \path[fill=gray!25] ({\c1*\c0/sqrt(2)},{\c1*\c0/sqrt(2)}) circle [radius=1];
 \path[fill=gray!5] (\c0,0) arc (0:90:\c0)--(0,.4*\c0) arc (90:0:.4*\c0)--cycle;
 \draw ({\c1*\c0/sqrt(2)},{\c1*\c0/sqrt(2)}) circle [radius=1];
 \draw (\c0,0) arc (0:90:\c0);
 \draw[densely dashed] (\c0-1,0) arc (0:90:\c0-1);
 \draw (.4*\c0,0) arc (0:90:.4*\c0);
 \draw[fill] ({\c1*\c0/sqrt(2)},{\c1*\c0/sqrt(2)}) circle [radius=0.05] node[right] {$x$};
 \draw ({\c1*\c0/sqrt(2)},{\c1*\c0/sqrt(2)}) circle [radius=(1-\c1)*\c0];
 \draw[dashed,<->,>=stealth] (15:\c0-1)--(15:\c0);
 \node[above] at (15:\c0-.5) {$\delta$};
 \node[below right] at ({(\c1*\c0+1.2)/sqrt(2)},{(\c1*\c0+1.2)/sqrt(2)}) {$B_\delta(x)$};
 \node[above right] at (30:.4*\c0) {$\partial B_{i+1}$};
 \node[above right] at (70:\c0) {$\partial B_i$};
 \node[above right] at (70:.5*\c0) {$S_i$};
\draw[-{Stealth[scale length=1]}] (5,2.8) .. controls +(-120:0.5cm) and +(-15:0.5cm) .. (3.7,2.8);
\node[above] at (5.6,2.7) {$B_{r_i-|x|}(x)$};
\end{tikzpicture}
\caption{when $x$ is $\delta$-close to $\partial S_i$, the darker shaded part of the ball $B_\delta(x)$ (where the difference $w^h(x)-w^h(y)$ does not vanish) is controlled by the annular region.}
\label{fig:B}
\end{center}
\end{figure}
Assume now, for example, that $r_i-\delta<|x|<r_i$
(see Figure \ref{fig:B}), which implies that $y\in S_{i-1}$ (the case $r_{i+1}<|x|<r_{i+1}+\delta$
being analogous). Then
\begin{align*}
\int_{B_\delta(x)\setminus S_i}\frac1{|x-y|^{\dd+2s}}\,dy
    &\le \int_{\{y:\ r_i-|x|<|y-x|<\delta\}}\frac1{|x-y|^{\dd+2s}}\,dy\\
    &=\frac{\dd\omega_\dd }{2s}\bigl[(r_i-|x|)^{-2s}-\delta^{-2s}\bigr].
\end{align*}
We now integrate the term in square brackets with respect to $x$ in the outer strip $r_i-\delta<|x|<r_i$:
\begin{align*}
 &\hspace{-2em}\int_{\{x:\ r_i-\delta<|x|<r_i\}} \bigl[(r_i-|x|)^{-2s}-\delta^{-2s}\bigr]\,dx\\
    &=\dd\omega_\dd\int_{r_i-\delta}^{r_i}\bigl[(r_i-\varrho)^{-2s}-\delta^{-2s}\bigr]\varrho^{\dd-1}\,d\varrho\\
    &\le \dd\omega_\dd r_i^{\dd-1}\int_{r_i-\delta}^{r_i}\bigl[(r_i-\varrho)^{-2s}-\delta^{-2s}\bigr]\,d\varrho
    =\frac{2s\dd\omega_\dd}{1-2s}r_i^{\dd-1}\delta^{1-2s}.
\end{align*}
An analogous estimate holds for the inner strip.
\par
Returning to \eqref{eq:bdry_Si}, we get the following:
$$
\int_{S_i} dx\int_{B_\delta(x)}\frac{|w^h(x)-w^h(y)|^2}{|x-y|^{\dd+2s}}\,dy
    \le 2 \frac{(\dd\omega_\dd)^2}{1-2s} R_0^{\dd-1} h^2\delta^{1-2s}.
$$
\par
We now consider the integral of the term $|w(x)-w(y)|^2$ in \eqref{eq:w_split}.
Let $L$ be a Lipschitz constant for $w$. Then
\begin{align*}
\int_{S_i}\ dx\int_{B_\delta(x)}\frac{|w(x)-w(y)|^2}{|x-y|^{\dd+2s}}\,dy 
    &\le L^2 \int_{S_i} dx\int_{B_\delta(x)}\frac1{|x-y|^{\dd-2+2s}}\,dy\\
    &=L^2|S_i|\frac{\dd\omega_\dd}{2-2s} \delta^{2-2s}.
\end{align*}
We conclude that for $s$ sufficiently small (say $s<1/3$) and $i=0,\ldots,n-1$:
$$
\int_{S_i} dx\int_{B_\delta(x)}\frac{|\nu_h(x)-\nu_h(y)|^2}{|x-y|^{\dd+2s}}\,dy
    \le C(N,w)\bigl(h^2\delta^{1-2s}+ |S_i|\delta^{2-2s}\bigr),
$$
where $C(N,w)$ is a constant depending only on $\dd$ and $w$. The same inequality holds for $S_n$ (only the outer boundary has to be considered). Finally, we estimate $I_2$ using the fact that $n\le M/h$,
so that we can assume $(n+1)h\le 2M$:
$$
I_2\le 2C(N,w)\bigl(2M h\delta^{1-2s} + |B_{R_0}(0)| \delta^{2-2s}\bigr).
$$
\par
Now, we gather the estimate for $I_1$ and $I_2$, and take into account the inequalities \eqref{eq:est_dh}. Then
\begin{equation}\label{eq:est_nu}
|\nu_h|_{2,s}^2\le 2\,C(w) h^{2-2s},
\end{equation}
where $C(w)$ denotes a constant that depends only on $w$. 
\par

Since $\msF_s^0(f)^{1/2}=\sqrt s\,|f|_{2,s}$ is a seminorm,
\[
\bigl|\msF_s^0(w^h)^{1/2}-\msF_s^0(w)^{1/2}\bigr|\le \msF_s^0(\nu_h)^{1/2}\le C h^{1-s},
\]
and $\msF_s^0(w)$ is bounded for small $s$ by Theorem~\ref{thm:pws}. Therefore
$|\msF_s^0(w^h)-\msF_s^0(w)|\le C h^{1-s}\le Ch^{1-s_0}$ for $s\le s_0$.
This concludes Step 4, and the proof of the proposition. \qed

\section{Reduction to pointwise convergence via rearrangements}\label{sec:rearr}

The key idea of the proof of Theorem \ref{thm:main} is to replace functions $u_s$ by their spherically symmetric rearrangements. This is possible since the energy decreases (in the limit) by rearrangements (Step 1 in this section). Symmetric functions enjoy better convergence properties and allow passing from a weakly convergent $u_s$ to a strongly convergent sequence (Step 2). In turn, the strong convergence makes it possible to replace the sequence with its limit (Step 3), thus reducing the problem to a pointwise convergence of the energy, which is proved in the previous section.

We note two technical points. First, the property of strong convergence of rearrangements demands an equiboundedness assumption, which will be subsequently removed (Step 4). Second, {since the reduction to a pointwise-convergence problem can be carried out more easily for smooth functions, we make use of a standard regularizing procedure in Step 3.}

As we will see, the case $r_s\equiv+\infty$ makes the structure of the proof more transparent, and it represents an essential part of the analysis. In the following steps 
we now show how the general case reduces to this one  
by relying on the decomposition in \eqref{eq:dec_F}.

\smallskip
Consider $u_s\in L^2(\re^\dd)$, with $s>0$, and such that
$$
\hbox{$u_s\rightharpoonup u$ weakly in $L^2(\mathbb R^N)$ as $s\to0$.}
$$
We assume that there exists $\Rbar>0$ such that  
the functions $u_s$ have support in a common compact subset of the open ball $B_{R_0}(0)$.
Moreover, up to a subsequence we can suppose that $\liminf$ in the statement of Theorem~\ref{thm:main} is a limit, so that it does not change by considering further subsequences.

As mentioned above, we first argue under the additional assumption that 
\begin{equation}\label{eq:bound}
\text{there exists $M>0$ such that $|u_s|\le M$.}
\end{equation}
\par
If $w\colon\re^\dd\to\re$ is a Lebesgue-measurable function, we denote by $\sphr{w}$ the \emph{spherically symmetric rearrangement} of $w$ \cite[Definitions 10.16 and 10.33]{Leoni:frac}.
If $w$ vanishes at infinity, then (see \cite[Theorem 10.34]{Leoni:frac}) 
$
|\sphr{w}|_{2,s}\le |w|_{2,s}.
$
This inequality yields $\msF_s^0(\sphr{u}_s)\le \msF_s^0(u_s)$. The next step shows that an analogous inequality holds
for $\msF_s$ as $s\to0$.

\subparagraph{Step 1} We prove that for every $s>0$
\begin{equation}\label{eq:lwe}
\msF_s(\sphr{u}_s)\le \msF_s(u_s)+o_s(1). 
\end{equation}

\par
Apply \eqref{eq:dec_F} to  $\msF_s(u_s)$ and $\msF_s(\sphr{u}_s)$; we have
$$
\msF_s(\sphr{u}_s)-\msF_s(u_s)=\msF_s^0(\sphr{u}_s)-\msF_s^0(u_s)+\sigma(s),
$$
where 
$$
\sigma(s)=\delta_s\bigl(u_s^2-(\sphr{u}_s)^2\bigr)
    =\int_{B_{R_0}}\eta_s(x)\bigl(u_s^2(x)-(\sphr{u}_s)^2(x)\bigr)\,dx.
$$
Since $\msF_s^0(\sphr{u}_s)\le \msF_s^0(u_s)$, it is enough to show that $\sigma(s)\to0$ as $s\to0$.
Recalling that $u_s$ and $\sphr{u}_s$ have the same $L^2$ norm, we can write
$$
\sigma(s)=\int_{B_{R_0}} (\eta_s(x)-\lambda\dd\omega_\dd)\bigl(u_s^2(x)-(\sphr{u}_s)^2(x)\bigr)\,dx\,.
$$
We now get \eqref{eq:lwe} by the $L^2$ equiboundedness of $u_s$ and $\sphr{u}_s$ and by Proposition~\ref{prop:corr}.

\subparagraph{Step 2} 
Since $\sphr{u}_s$ is radially symmetric and radially decreasing
(see \cite[Theorem 10.17]{Leoni:frac}) there exists a decreasing function
$\f_s\colon[0,+\infty)\to[0,+\infty)$ such that
$$
\sphr{u}_s(x)=\f_s(|x|),\qquad x\in\re^\dd.
$$
From \eqref{eq:bound} we also have
$$
0\le \f_s \le M;\qquad
\f_s(r)=0\quad\text{if $r>\Rbar$.}
$$
By Helly's Theorem there exists a subsequence of $(\f_s)_s$
that converges pointwise in $[0,\Rbar]$.
We can assume, for simplicity, that the whole sequence $(\f_s)_s$ converges; if we denote by $\f$ the limit, we deduce that

\begin{equation}\label{eq:strongconv}
\sphr{u}_s\to v\quad\text{strongly in $L^2(\re^\dd)$, where $v(x)=\varphi(|x|)$.}
\end{equation}

It remains to prove Theorem \ref{thm:main} for the sequence 
$(\sphr{u}_s)_s$, which strongly converges to $v$; i.e., we have to prove that
\begin{equation}\label{eq:non_smooth}
 (1-\lambda)\dd \omega_\dd \|v\|_{L^2}^2\le \liminf_{s\to0} \msF_s(\sphr{u}_s).
\end{equation}
Indeed, $\msF_s(\sphr{u}_s)\le \msF_s(u_s)+o_s(1)$ by \eqref{eq:lwe}, and
$\|v\|_{L^2(\re^\dd)}\ge \|u\|_{L^2(\re^\dd)}$. This last inequality follows from
 $\|\sphr{u}_s\|_{L^2(\re^\dd)}=\|u_s\|_{L^2(\re^\dd)}$, since
\begin{equation}\label{eq:conv_debole}
\|v\|_{L^2(\re^\dd)}=\lim_{s\to0}\|\sphr{u}_s\|_{L^2(\re^\dd)}
    =\lim_{s\to0}\|u_s\|_{L^2(\re^\dd)}\ge \|u\|_{L^2(\re^\dd)}\,.
\end{equation}
\begin{remark}\label{rk:weak_con}\normalfont
 For future reference we explicitly note that the last inequality in \eqref{eq:conv_debole} is the only point where the weak convergence of $(u_s)$ is used. In particular, this will be used in the final step of this section, when we will remove the additional assumption \eqref{eq:bound}.
\end{remark}
Thanks to the previous steps we have reduced the problem to the proof of~\eqref{eq:non_smooth}.

\subparagraph{Step 3} ({\it Reduction to pointwise convergence; regularization})
In order to prove \eqref{eq:non_smooth}:
\begin{itemize}
 \item[$a)$] we replace $\liminf_{s\to 0} \msF_s(\sphr{u}_s)$ with $\liminf_{s\to0} \msF_s(v)$, by exploiting the strong 
 $L^2$ convergence of $(\sphr{u}_s)$ to $v$;
 \item[$b)$] we apply the pointwise convergence of $\msF_s$ for spherically symmetric and radially decreasing functions obtained in the previous section.
\end{itemize}
{It is convenient to first prove claim (a) for the regularized
functions; we will then obtain the general case by letting the
smoothing parameter $\e$ tend to $0$.}

Let $(\varrho_\e)_{\e}$ be standard symmetric regularizing kernels. Define
\begin{equation}\label{eq:reg}
\overline u^\e_s=\varrho_\e*\sphr{u}_s,\qquad v^\e=\varrho_\e*v.
\end{equation}
For every $w\in L^2_{\text{loc}}(\re^\dd)$ with $|w|_{2,s}<+\infty$,
we know that
$
|\varrho_\e*w|_{2,s}\le |w|_{2,s}
$   (\cite{Leoni:frac}, Theorem 6.62). 
This implies that $\msF_s^0(\overline u^\e_s)\le \msF_s^0(\sphr{u}_s)$. We next extend this inequality to $\msF_s$. More precisely, we claim that

\begin{equation}\label{prop:reg} \liminf_{s\to0}\msF_s(\overline u_s^\e)\le \liminf_{s\to0}\msF_s(\sphr{u}_s)+ o_\e(1).
\end{equation}
To prove this claim, we
apply \eqref{eq:dec_F} to $\msF_s(\overline u_s^\e)$ and $\msF_s(\sphr{u}_s)$, and use the inequality $\msF_s^0(\overline u^\e_s)\le \msF_s^0(\sphr{u}_s)$. Then we have
$$
\msF_s(\overline u_s^\e)-\msF_s(\sphr{u}_s)=\msF_s^0(\overline u^\e_s)- \msF_s^0(\sphr{u}_s)+\sigma^\e(s)\le \sigma^\e(s),
$$
where $\sigma^\e(s)=\delta_s\bigl((\sphr{u}_s)^2-(\overline u_s^\e)^2\bigr)$,
so that
\begin{align*}
|\sigma^\e(s)|&\le\int_{B_{R_0}}\eta_s \bigl|(\sphr{u}_s)^2-(\overline u_s^\e)^2\bigr|\,dx\\
        &\le \int_{B_{R_0}}|\eta_s-\lambda\dd\omega_\dd| \bigl|(\sphr{u}_s)^2-(\overline u_s^\e)^2\bigr| dx 
            + \lambda\dd\omega_\dd\int_{B_{R_0}}\bigl|(\sphr{u}_s)^2-(\overline u_s^\e)^2\bigr| dx.
\end{align*}
By Proposition \ref{prop:corr} and the $L^2$ boundedness of $u_s$ (and, hence, of $\sphr{u}_s$ and $\overline u_s^\e$),
the first integral in the right-hand side of the last inequality tends to zero as $s\to0$
(uniformly with respect to $\e$). The same $L^2$ boundedness applied to the other integral yields the following estimate
($C$ a positive constant independent of $s$ and $\e$):
\begin{align*}
\int_{B_{R_0}}|\sphr{u}_s+\overline u_s^\e| |\sphr{u}_s-\overline u_s^\e| dx
    &\le C\|\sphr{u}_s-\overline u_s^\e\|_{L^2(\re^\dd)}\\
    &\le C(\|\sphr{u}_s-v\|_{L^2}+\|v-v^\e\|_{L^2}+\|v^\e-\overline u_s^\e\|_{L^2})\\
    &\le C(2\|\sphr{u}_s-v\|_{L^2}+\|v-v^\e\|_{L^2}),
\end{align*}
since $\|v^\e-\overline u_s^\e\|_{L^2}=\|(v-\sphr{u}_s)^\e\|_{L^2}\le \|v-\sphr{u}_s\|_{L^2}$. Therefore, we get
$$
\limsup_{s\to0}|\sigma^\e(s)|\le C\|v-v^\e\|_{L^2(\re^\dd)}\,,
$$
concluding the proof. This proves claim \eqref{prop:reg}.

\medskip
\par
We now prove (a) with $\overline u^\e_s$ and $v^\e$ in place of $\sphr{u}_s$ and $v$, respectively. To this end we claim that
\begin{equation}\label{prop:linf_lim}
\liminf_{s\to0} \msF_s(\overline u^\e_s)=\liminf_{s\to0} \msF_s(v^\e).
\end{equation}

We first address the case $r_s\equiv+\infty$.
Let $\e>0$ be fixed. Since the functions $\sphr{u}_s-v$ are equibounded in $L^1(\re^\dd)$, we can choose a Lipschitz constant $L_\e$ for $\overline u^\e_s-v^\e=(\sphr{u}_s-v)^\e$ which is independent of $s$: indeed, a Lipschitz constant is given by $\|\nabla\varrho_\e\|_\infty\,\|\sphr{u}_s-v\|_{L^1}$.
We now show that
\begin{equation}\label{eq:convregol}
\lim_{s\to0} s|\overline u^\e_s-v^\e|^2_{2,s}=0.
\end{equation}
For every $R>1$ we have that
\begin{align*}
 I:&=|\overline u^\e_s-v^\e|^2_{2,s}
    =\int_{\re^\dd}\frac1{|\xi|^{\dd+2s}}\, d\xi\, \int_{\re^\dd}|\Delta_\xi(\overline u^\e_s-v^\e)|^2(x)\,dx\\
    &=\left(\int_{B_R(0)}\frac1{|\xi|^{\dd+2s}}\, d\xi + \int_{\re^\dd\setminus B_R(0)}\frac1{|\xi|^{\dd+2s}}\, d\xi\right)
            \int_{\re^\dd}|\Delta_\xi(\overline u^\e_s-v^\e)(x)|^2\,dx\\
    &=: I_1 + I_2,
\end{align*}
where we have set $(\Delta_\xi f)(x)=f(x+\xi)-f(x)$.
We now estimate $I_1$. Take into account that the support of $(\overline u^\e_s-v^\e)$ is contained in $B_{R_0}(0)$, and let 
$c=\lfloor R_0/R\rfloor+2$; then $R+R_0\le cR$ and
$$
I_1\le \int_{B_R(0)}\frac1{|\xi|^{\dd+2s}} (L_\e |\xi|)^2 |B_{cR}(0)|\, d\xi
    =C R^\dd\int_0^R \frac1{\varrho^{\dd+2s-2}}\varrho^{\dd-1}\,d\varrho,
$$
where $C$ is a positive constant independent of $s$. We conclude that, for $s<1/2$,
\begin{equation}\label{eq:est:I1}
I_1\le C R^{\dd+2-2s}.
\end{equation}
\par
We now address $I_2$. Note that for every $\xi\in\re^\dd$
\begin{align*}
\int_{\re^\dd}|\Delta_\xi(\overline u^\e_s-v^\e)(x)|^2\,dx &
    \le 2\left(\int_{\re^\dd}|(\overline u^\e_s-v^\e)(x+\xi)|^2\,dx +
    \int_{\re^\dd}|(\overline u^\e_s-v^\e)(x)|^2\,dx\right)\cr
    &=4\|\overline u^\e_s-v^\e\|_{L^2(\re^\dd)}^2\le 4\|\sphr{u}_s-v\|_{L^2(\re^\dd)}^2\,.
\end{align*}
Therefore
\begin{align*}
I_2&=\int_{\{|\xi|\ge R\}} \frac1{|\xi|^{\dd+2s}} d\xi\int_{\re^\dd}|\Delta_\xi(\overline u^\e_s-v^\e)(x)|^2\,dx\cr
    &\le 4\|\sphr{u}_s-v\|_{L^2(\re^\dd)}^2\int_{\{|\xi|\ge R\}} \frac1{|\xi|^{\dd+2s}}\, d\xi
    \le 2\dd\omega_\dd\|\sphr{u}_s-v\|_{L^2(\re^\dd)}^2\, \frac1{s R^{2s}}.
\end{align*}
\par
Combining the two estimates and recalling that $R>1$, we have
\begin{align*}
s|\overline u^\e_s-v^\e|^2_{2,s}&= sI\le C\bigl(sR^{\dd+2-2s} + \|\sphr{u}_s-v\|_{L^2(\re^\dd)}^2\, R^{-2s}\bigr)\cr
    &\le sCR^{\dd+2-2s} + 2\|\sphr{u}_s-v\|_{L^2(\re^\dd)}^2.
\end{align*}
Passing to the limit as $s\to0$ and using \eqref{eq:strongconv}, we obtain \eqref{eq:convregol}.
\par
The seminorm property $\bigl||\overline u^\e_s|_{2,s}-|v^\e|_{2,s}\bigr|\le |\overline u^\e_s-v^\e|_{2,s}$, 
together with \eqref{eq:convregol}, now implies that
$$
\lim_{s\to0}|\bigl(\msF_s^0(\overline u_s^\e)\bigr)^{1/2} - \bigl(\msF_s^0(v^\e)\bigr)^{1/2}|=0,
$$
from which $\liminf_{s\to0}\bigl(\msF_s^0(\overline u_s^\e)\bigr)^{1/2}=\liminf_{s\to0} \bigl(\msF_s^0(v^\e)\bigr)^{1/2}$. Claim \eqref{prop:linf_lim} follows in the case $r_s\equiv+\infty$.
\par
We now consider the general case. Apply \eqref{eq:dec_F} to $\msF_s(\overline u^\e_s)$ and 
$\msF_s(v^\e)$, and consider $\delta_s\bigl((\overline u^\e_s)^2 - (v^\e)^2\bigr)$.
By the $L^2$ boundedness of $u_s$ there exists $C>0$ such that
$$
\int_{B_{R_0}}\bigl|(\overline u^\e_s)^2 - (v^\e)^2\bigr|\,dx\le C\|\overline u^\e_s - v^\e\|_{L^2(\re^\dd)}
    \le C\|\sphr{u}_s - v\|_{L^2(\re^\dd)}\to 0\quad\text{as $s\to0$.}
$$
Together with the uniform convergence of $\eta_s$ (Proposition \ref{prop:corr}) this implies that
$\delta_s\bigl((\overline u^\e_s)^2 - (v^\e)^2\bigr)\to0$ as $s\to0$. Since $\delta_s\bigl((v^\e)^2\bigr)$ 
has a limit (with value $\lambda\dd\omega_\dd\|v^\e\|_{L^2}^2$) as $s\to0$, then
$\delta_s\bigl((\overline u^\e_s)^2\bigr)$ has limit, too, with the same value. Hence, taking into account what already proved above for $\msF_s^0$, we deduce that
\begin{align*}
 \liminf_{s\to0}\msF_s(\overline u_s^\e)&=\liminf_{s\to0}\bigl[\msF_s^0(\overline u_s^\e)
    -\delta_s\bigl((\overline u^\e_s)^2\bigr)\bigr]\\
    &=\liminf_{s\to0}\bigl[\msF_s^0(\overline u_s^\e)
    -\delta_s\bigl((v^\e)^2\bigr)\bigr]\\
    &=\liminf_{s\to0}\bigl[\msF_s^0(v^\e)
    -\delta_s\bigl((v^\e)^2\bigr)\bigr]=\liminf_{s\to0}\msF_s(v^\e);
\end{align*}
that is, \eqref{prop:linf_lim}.

\subparagraph{Step 4} 
We now collect the results above, and take the pointwise convergence proved in the previous section into account. In this way, we have proved Theorem \ref{thm:main} under the additional assumption 
\eqref{eq:bound}. Next, we will remove this condition.
\par
By \eqref{eq:lwe} (Step 1) we have
$$
\liminf_{s\to0}\msF_s(u_s)
\ge\liminf_{s\to0}\msF_s(\sphr{u}_s), 
$$
and by \eqref{prop:reg} and \eqref{prop:linf_lim} (Step 3)
\begin{equation}\label{eq:collect}
\liminf_{s\to0} \msF_s(\sphr{u}_s)+ o_\e(1) \ge \liminf_{s\to0}\msF_s(\overline u_s^\e)
=\liminf_{s\to0}\msF_s(v^\e). 
\end{equation}
The function $v^\e$ defined in \eqref{eq:reg} is radially symmetric, radially decreasing and has compact support.
Thus, $v^\e(x)=\phi^\e(|x|)$, with $\phi^\e$ non-negative, decreasing $C^1$ function on $[0,+\infty)$ with compact support.
Therefore, we can apply Proposition~\ref{prop:pws_radial} with $\wphi=v^\e$ and $\phi=\phi^\e$,
and obtain that the liminf on the right-hand side equals 
$(1-\lambda)\dd\omega_\dd\|v^\e\|_{L^2(\re^\dd)}^2$. 
Finally, letting $\e$ go to zero 
and applying \eqref{eq:conv_debole} (Step 2) we conclude.
\medskip

We finally remove assumption \eqref{eq:bound}. For every $M>0$ and $w\colon \re^\dd\to\re$ let $w^M=(w\wedge M)\vee(-M)$.
Since $t\mapsto (t\wedge M)\vee(-M)$ is a Lipschitz function with Lipschitz constant 1, we have
$$
\msF_s(u_s^M)\le \msF_s(u_s).
$$
Taking Remark \ref{rk:weak_con} into account, we obtain that
$$
\liminf_{s\to0} \msF_s(u_s^M)\ge (1-\lambda)\dd\omega_\dd\|v_M\|_{L^2(\re^\dd)}^2\,,
$$
where $v_M$ is the strong $L^2$-limit of $\sphr{(u_s^M)}$ (recall \eqref{eq:strongconv}).
Since (compare \eqref{eq:conv_debole})
$$
\|v_M\|_{L^2(\re^\dd)}=\lim_{s\to0}\|\sphr{(u_s^M)}\|_{L^2(\re^\dd)}
    =\lim_{s\to0}\|u_s^M\|_{L^2(\re^\dd)}\,,
$$
 we conclude by choosing the truncation level $M$ in a suitable way. This follows by a general result on weakly converging sequences, which we state and prove in the following Lemma~\ref{lem}. \qed

\begin{lemma}\label{lem}
 Let $(w_n)_n$ be a weakly convergent sequence in $L^2(\re^\dd)$. Let $w$ be the limit function.
 Then, for every $\e>0$ there exists $M>0$ such that 
 $$
 \liminf_{n\to+\infty}\|w_n^M\|_{L^2(\re^\dd)}\ge \|w\|_{L^2(\re^\dd)}-\e,
 $$
where $w_n^M=(w_n\wedge M)\vee(-M)$.
\end{lemma}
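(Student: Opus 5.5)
The idea is to use weak lower semicontinuity of the norm, applied to a weak limit of the truncated sequence rather than of $(w_n)$ itself. We may assume $w\neq0$ and $\e<\|w\|_{L^2}$. Since $(w_n)$ converges weakly, it is bounded in $L^2$; set $K=\sup_n\|w_n\|_{L^2}$. For fixed $M$, the truncations satisfy $|w_n^M|\le|w_n|$, so $(w_n^M)_n$ is bounded in $L^2$. First pass to a subsequence realizing $\liminf_n\|w_n^M\|_{L^2}$, then to a further subsequence with $w_n^M\rightharpoonup g_M$ weakly in $L^2$. Weak lower semicontinuity of the norm gives
$$
\liminf_{n}\|w_n^M\|_{L^2}\ge\|g_M\|_{L^2},
$$
so it suffices to show that $\|g_M\|_{L^2}\ge\|w\|_{L^2}-\e$ for $M$ large, uniformly in the choice of subsequences.

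Next, estimate the remainder $w_n-w_n^M$. It vanishes where $|w_n|\le M$, and on $\{|w_n|>M\}$ it satisfies $|w_n-w_n^M|\le|w_n|$. By Chebyshev, $|\{|w_n|>M\}|\le K^2/M^2$. For any test function $\psi\in L^2(\re^\dd)$, Cauchy--Schwarz gives
$$
\Bigl|\int (w_n-w_n^M)\psi\,dx\Bigr|\le K\Bigl(\int_{\{|w_n|>M\}}\psi^2\,dx\Bigr)^{1/2}.
$$
Since $\psi^2\in L^1$, its integral is absolutely continuous: the last integral is at most $\theta(K^2/M^2)$, where $\theta(t)=\sup_{|E|\le t}\int_E\psi^2$ and $\theta(t)\to0$ as $t\to0$. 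Along the chosen subsequence this yields
$$
\Bigl|\int (w-g_M)\psi\,dx\Bigr|\le K\,\theta(K^2/M^2)^{1/2}.
$$

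Finally, take $\psi=w/\|w\|_{L^2}$. Then $\|g_M\|_{L^2}\ge\int g_M\psi\,dx\ge\|w\|_{L^2}-K\,\theta(K^2/M^2)^{1/2}$. Choosing $M$ so large that $K\,\theta(K^2/M^2)^{1/2}\le\e$ gives the claim; note that this choice of $M$ depends only on $K$ and $w$, not on the subsequences.

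The only delicate point is that no uniform integrability of $(w_n^2)$ is available, so $\|w_n-w_n^M\|_{L^2}$ need not be small. The argument avoids this by testing the remainder against the fixed function $w$, whose square is integrable, combined with the uniform smallness of the sets $\{|w_n|>M\}$.
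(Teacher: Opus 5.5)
Your proof is correct and rests on the same estimate as the paper: test the remainder $w_n-w_n^M$ against the fixed function $w$, then use Cauchy--Schwarz with $\sup_n\|w_n\|_{L^2}$, Chebyshev's bound on $|\{|w_n|>M\}|$, and absolute continuity of $\int w^2$. The only difference is that the paper does not extract the weak limit $g_M$ or invoke weak lower semicontinuity; it bounds $\int w_n^M w\,dx\le\|w\|_{L^2}\|w_n^M\|_{L^2}$ directly for each $n$ and then takes the $\liminf$.
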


\proof  We may assume that $w\not\equiv 0$. For every $M>0$ we have
$$
 \int_{\re^\dd} w_n^M w\, dx
    =\int_{\re^\dd} w_n w\,dx - \int_{\re^\dd} (w_n- w_n^M)w\,dx\,.
$$
We show that the second integral on the right-hand side can be made arbitrarily small for a suitable $M$ independent of $n$.
Indeed,
\begin{align*}
\left| \int_ {\re^\dd} (w_n-w_n^M) w\, dx\right|&\le \int_{\{|w_n|>M\}}(|w_n| -M) |w|\, dx\le \int_{\{|w_n|>M\}}|w_n w|\, dx\\
    &\le \| w_n\|_{L^2(\re^\dd)} \left( \int_{\{|w_n|>M\}}w^2\, dx\right)^{1/2}.
\end{align*}
The weak convergence of $(w_n)$ implies that $\|w_n\|_{L^2(\re^\dd)}\le C$ for a suitable $C$ independent of $n$. 
By Chebyshev's inequality $|\{|w_n|>M\}|\le (C/M)^2$. Thus, the integrability of $w^2$ implies that
for every $\sigma>0$ we can find a value $M$ such that
$\bigl(\int_{\{|w_n|>M\}}w^2\, dx\bigr)^{1/2}\le \sigma$ for every $n$. 
Then
$$
\liminf_{n\to+\infty} \int_{\re^\dd} w_n^M w\, dx\ge \liminf_{n\to+\infty} \int_{\re^\dd} w_n w\,dx - \sigma C
    =\|w\|_{L^2(\re^\dd)}^2- \sigma C.
$$
The left-hand side is majorized by $\|w\|_{L^2}\liminf_n\|w_n^M\|_{L^2}$. Thus
$$
\liminf_n\|w_n^M\|_{L^2}\ge \|w\|_{L^2} -\sigma C/\|w\|_{L^2}.
$$
We conclude by taking $\sigma=\e \|w\|_{L^2}/C$. \qed

\section{Convergence on smooth functions}\label{sec:limsup}

In Remark \ref{rk:dec_F} we noted that it is enough to prove Theorem \ref{thm:pws} 
in the case $r_s\equiv +\infty$; i.e., for $\msF_s^0$.
As we mentioned, this result
is contained in Theorem 3 of \cite{Maz-Shap02}.
For completeness, we adapt the proof here to Lipschitz functions with compact support.

\par
Let
$u$ be a Lipschitz function on $\re^\dd$ with compact support.
In the integral defining the Gagliardo $s$-seminorm of $u$, we split $\re^\dd\times\re^\dd$ into the set $\{(x,y)\in \re^\dd\times\re^\dd:\ |y|\ge 2|x|\ \text{or}\ |x|\ge2|y|\}$ and its complement. By symmetry, $\msF_s^0(u)=s|u|_{2,s}^2=2s(I_1+I_2)$, with
$$
I_1=\int\int_{\{|y|\ge 2|x|\}}\frac{|u(x)-u(y)|^2}{|x-y|^{\dd+2s}}\,dx\, dy\,,
$$
and
$$
I_2=\int\int_{\{|x|<|y|< 2|x|\}}\frac{|u(x)-u(y)|^2}{|x-y|^{\dd+2s}}\,dx\, dy\,.
$$
By the Minkowski inequality (with respect to the measure $1/|x-y|^{\dd+2s}dx\,dy$) we get
$$
I_1^{\frac12}\le \Bigl(\int\int_{\{|y|\ge 2|x|\}}\frac{|u(x)|^2}{|x-y|^{\dd+2s}}\, dx dy\Bigr)^{\frac12}
    + \Bigl(\int\int_{\{|y|\ge 2|x|\}}\frac{|u(y)|^2}{|x-y|^{\dd+2s}}\, dx dy\Bigr)^{\frac12}.
$$
Since $|y|\ge 2|x|$ implies that $|x-y|\ge |x|$, for the first integral on the right-hand side, we have
\begin{align*}
\int_{\re^\dd}|u(x)|^2 dx &\int_{\{|y|\ge 2|x|\}}\frac1{|x-y|^{\dd+2s}}\,dy\\
    &\hspace{-2em}\le \int_{\re^\dd}|u(x)|^2 dx \int_{\{|x-y|\ge |x|\}}\frac1{|x-y|^{\dd+2s}}\,dy
    =\frac{\dd\omega_\dd}{2s}\int_{\re^\dd}\frac{|u(x)|^2}{|x|^{2s}}\,dx.
\end{align*}
For the second integral, we note that the condition $|y|\ge 2|x|$ implies $|x-y|\ge |y|/2$; hence,
\begin{align*}
\int_{\re^\dd}|u(y)|^2 dy &\int_{\{|y|\ge 2|x|\}}\frac1{|x-y|^{\dd+2s}}\,dx\\
    &\hspace{-2em}\le \int_{\re^\dd}|u(y)|^2 dy \int_{\{\{|x|<\frac12|y|\}}\frac1{(|y|/2)^{\dd+2s}}\,dx
    =2^{2s}\omega_\dd\int_{\re^\dd}\frac{|u(y)|^2}{|y|^{2s}}\,dy.
\end{align*}
If we collect the above estimates, we deduce that
$$
sI_1\le \left(\Bigl(\frac{\dd\omega_\dd}{2}
    \int_{\re^\dd}\frac{|u(x)|^2}{|x|^{2s}}\,dx\Bigr)^{1/2} + 
    2^s\Bigl(\omega_\dd s\int_{\re^\dd}\frac{|u(y)|^2}{|y|^{2s}}\,dy\Bigr)^{1/2}\right)^2.
$$
We now note that $\int_{\re^\dd\setminus B_1(0)}|u(x)|^2/|x|^{2s} dx
\le \int_{\re^\dd\setminus B_1(0)}|u(x)|^2 dx$,
while in $B_1(0)$ the function $|u(x)|^2/|x|^{2s}$ is integrable (since $u$ is bounded) 
and decreasing as $s\to0$; then
$$
\limsup_{s\to0}\int_{\re^\dd} \frac{|u(x)|^2}{|x|^{2s}}\, dx\le\|u\|_{L^2(\re^\dd)}^2.
$$
It follows that $\limsup_{s\to0} 2sI_1\le \dd\omega_\dd \|u\|_{L^2(\re^\dd)}^2$. It remains to show that
$s I_2$ vanishes as $s\to0$.
\par
Since $u$ has compact support, the set of points $(x,y)$ for which
the integrand in $I_2$ does not vanish is contained in a bounded subset $E$ of $\re^{2\dd}$.
Hence, if we denote by $L$ a Lipschitz constant for $u$, we have
$$
sI_2\le sL^2\int\int_E\frac1{|x-y|^{\dd-2+2s}} dx\,dy\to0\quad\text{as $s\to0$.} \qed
$$

\bigskip\noindent
{\bf Acknowledgments.} 
 Andrea Braides acknowledges funding by the Excellence Project MatMod@TOV (Grant Agreement No.~CUP\_E83C23000330006) of the
Department of Mathematics of the University of Rome Tor Vergata. 
Anneliese Defranceschi acknowledges financial support from the ``Calcolo delle Variazioni e Teoria Geometrica della Misura'' funds of the Department of Mathematics of the University of Trento.

\end{document}